\documentclass[12pt]{amsart} %amsfonts,
\usepackage{amsmath,amssymb,amscd,amsthm,bm}
\usepackage{latexsym}
\usepackage{graphicx}
\usepackage[english]{babel}
\usepackage[latin1]{inputenc}       % allow Latin1 characters
\usepackage{textcomp}
\usepackage[margin=1.4in]{geometry}
\usepackage{times}
\setlength{\textheight}{20cm} \textwidth 150mm
\usepackage{pgf,pgfarrows,pgfnodes,pgfautomata,pgfheaps}
\usepackage{colortbl}

\newtheorem{theorem}{Theorem}
\newtheorem{corollary}{Corollary}
\newtheorem{lemma}{Lemma}
\newtheorem{proposition}{Proposition}
\newtheorem{remark}{Remark}

\begin{document}

\title[Poincar\'e inequality for any power]{Improving Beckner's bound via Hermite functions}
\author[Paata Ivanisvili and  Alexander Volberg]{Paata Ivanisvili and Alexander Volberg}
\address{Department of Mathematics, Kent State University, Kent, OH 44240}
\email{ivanishvili.paata@gmail.com}
\address{Department of Mathematics, Michigan State University} 
\email{volberg@math.msu.edu}
\thanks{AV is partially supported by the NSF grant DMS-1600065 and by the Hausdorff Institute for Mathematics, Bonn, Germany}
\subjclass[2010]{42B37, 52A40, 35K55, 42C05, 60G15, 33C15, 46G12} 

\keywords{Poincar\'e inequality, log-Sobolev inequality, Sobolev inequality, Beckner inequality, Gaussian measure, Log-concave measures, semigroups,  Hermite polynomias, Hermite  differential equation, confluent hypergeometric functions,  Tur\'an's inequality,  Error term  in Jensen's inequality, Phi-entropy, Phi-Sobolev, F-Sobolve,  Phi-divergence, Information theory, backwards heat, Monge--Amper\`e with drift, exterior differential systems}
\begin{abstract}
We obtain an improvement of the Beckner's inequality $\| f\|^{2}_{2} -\|f\|^{2}_{p} \leq (2-p) \| \nabla f\|_{2}^{2}$ valid for $p \in [1,2]$ and the Gaussian measure. Our improvement is essential for the intermediate case $p \in (1,2)$, and moreover, we find the  natural extension of the inequality for any real $p$.
%Our methods use Hermite functions $H_{k}$ which satisfy Hermite differential equation for any real parameter $k$. 
\end{abstract}
%\date{\today}
\maketitle
%\tableofcontents

\section{Introduction}
\subsection{The history of the problem}
The Poincar\'e inequality~\cite{JN}  for the standard Gaussian measure 
$d\gamma_{n} = \frac{e^{-|x|^{2}/2}}{\sqrt{(2\pi)^{n}}}dx$  states that 
\begin{align}\label{poincare}
\int_{\mathbb{R}^{n}} f^{2} d\gamma_{n} - \left(\int_{\mathbb{R}^{n}} f d\gamma_{n}\right)^{2} \leq \int_{\mathbb{R}^{n}} | \nabla f |^{2} d\gamma_{n}
\end{align}
for any smooth bounded function $f :\mathbb{R}^{n} \to \mathbb{R}$.  Later William Beckner \cite{WB} generalized (\ref{poincare}) for any real power $p$, $1 \leq p \leq 2$  as follows 
\begin{align}\label{beckner}
\int_{\mathbb{R}^{n}} f^{p} d\gamma_{n} - \left(\int_{\mathbb{R}^{n}} f d\gamma_{n}\right)^{p} \leq \frac{p(p-1)}{2}\int_{\mathbb{R}^{n}} f^{p-2}| \nabla f |^{2} d\gamma_{n}
\end{align}
for any smooth bounded $f : \mathbb{R}^{n} \to (0,\infty)$. We caution the reader that in \cite{WB} inequality (\ref{beckner}) was formulated in a slightly different but equivalent form (see Theorem~1, inequality (3) in \cite{WB}). It should be also mentioned that in case $p=2$ inequality  (\ref{beckner}) does coincide with  (\ref{poincare}) for all $f \geq 0$  but it does not imply the Poincar\'e inequality for the functions taking the negative values, especially when $\int_{\mathbb{R}^{n}} f d \gamma_{n}=0$. If $p \to 1+$ then (\ref{beckner}) provides us with log-Sobolev inequality (see \cite{WB}). In general, the constant $\frac{p(p-1)}{2}$ is sharp in the right hand side of (\ref{beckner}) as it can be seen for $n=1$ on the test functions $f(x)=e^{\varepsilon x}$ by sending $\varepsilon \to 0$. 

Later Beckner's inequality (\ref{beckner}) was studied by many mathematicians for different measures, in different settings and for different spaces as well. For possible references we refer the reader to \cite{ABD, ALS,  BCR1, BCR2, BR1, Bob1, Bob2, BBL, Chaf, WFY, RK, KO}. 

An analysis done in \cite{IV} indicates that the right hand side (RHS) of (\ref{beckner}) can be improved. In the present paper we address this issue: what is the precise estimate of the difference  given in the left hand side (LHS) of (\ref{beckner}), and whether the requirement $p \in [1,2]$ can be avoided by slightly changing  the RHS of (\ref{beckner}). 

We give complete answers to these questions.
% but only in the case  when $p = \frac{k+1}{k}$ where $k \geq 1$ is an integer. 
For example, if $p=\frac{3}{2}$ we will obtain  an  improvement in Beckner's inequality (\ref{beckner})
\begin{align}
&\int_{\mathbb{R}^{n}} f^{3/2} d\gamma_{n} - \left(\int_{\mathbb{R}^{n}} f d\gamma_{n}\right)^{3/2} \leq \label{b3/2}\\
 &\int_{\mathbb{R}^{n}}\left(   f^{3/2} - \frac{1}{\sqrt{2}}(2f-\sqrt{f^{2}+ |\nabla f|^{2}})\sqrt{f+\sqrt{f^{2}+|\nabla f|^{2}}} \right)d\gamma_{n}. \nonumber
\end{align}
The LHS of  (\ref{b3/2}) coincides with the LHS of (\ref{beckner}) for $p=3/2$, but the RHS of (\ref{b3/2}) is strictly smaller than the RHS in (\ref{beckner}). Indeed, notice that we have the following \emph{pointwise} inequality
\begin{align}\label{impr1}
x^{3/2} - \frac{1}{\sqrt{2}}(2x-\sqrt{x^{2}+ y^{2}})\sqrt{x+\sqrt{x^{2}+y^{2}}} \leq \frac{3}{8} x^{-1/2}y^{2} \quad \text{for all} \quad  x, y \geq 0,
\end{align}
which follows from the homogeneity, i.e., take $x=1$. As one can see the improvement of Beckner's inequality (\ref{beckner}) is essential. Indeed, if $y \to \infty$ then the RHS of (\ref{impr1}) increases as $y^{2}$ whereas the LHS of (\ref{impr1}) increases as $y^{3/2}$. Also notice that if $x \to 0$ then the difference in (\ref{impr1}) tends to infinity. The only place where the quantities in (\ref{impr1}) are comparable is when $y/x \to 0$. 

%We will obtain the similar  improvements only for the case when $p =\frac{k+1}{k}$ for $k \geq 1$. Moreover, if $p = \frac{even}{odd}$ then we do not need $f$ to be positive. We do not know whether one can obtain the similar results if $p \neq \frac{k+1}{k}$. Our approach fails in this case because as we will see it is directly related to the fact that the Hermite differential equation 
%\begin{align*}
%y''-xy'+\frac{1}{p-1}y=0
%\end{align*}
 %has solutions which grow polynomially at infinity if and only if $\frac{1}{p-1}$ is an integer, and the solution is the Hermite polynom of degree $\frac{1}{p-1}$. 
\subsection{Main results}
Let $k$ be a real parameter. 
Let  $H_{k}(x)$ be the Hermite function  such that it satisfies the Hermite differential equation
\begin{align}\label{hermite}
H_{k}''-xH_{k}'+k H_{k}=0,  \quad x \in \mathbb{R},
\end{align}
and which grows relatively slowly $H_{k}(x) = x^{k}+o(x^{k})$ as $x \to +\infty$. If $k$ is a nonnegative integer then $H_{k}$ is the \emph{probabilists'} Hermite polynomial of degree $k$ with the leading coefficient $1$, for example, $H_{0}(x)=1, H_{1}(x)=x, H_{2}(x)=x^{2}-1$ etc.  In general, for arbitrary $k \in \mathbb{R}$ one should think that $H_{k}$ is the analytic extension of the Hermite polynomials in $k$ (existence and many other properties  will be mentioned in Section~\ref{prf}).

For $k \in \mathbb{R}$ let $R_{k}$ be the rightmost zero of $H_{k}(x)$ (see Lemma~\ref{EMMlemma}). If $k \leq 0$ then we set $R_{k}=-\infty$. 
 Define $F_{k}(x)$ as follows  
\begin{align}\label{bdef}
F_{k}\left(\left| \frac{H'_{k}(q)}{H_{k}(q)} \right|\right) = \frac{H_{k+1}(q)}{H^{1+\frac{1}{k}}_{k}(q)} \quad  \text{for} \quad q \in (R_{k}, \infty).
\end{align}

%For $k<0$ we  define $F_{k}(x)$ as follows  
%\begin{align}\label{bdef}
%F_{k}\left(-\frac{H'_{k}(q)}{H_{k}(q)}\right) = \frac{H_{k+1}(q)}{H^{1+\frac{1}{k}}_{k}(q)} \quad \text{for} \quad  q \in (-\infty, \infty).
%\end{align}

We will see in the next section $F_{k} \in C^{2}([0,\infty))$ is well-defined and $F_{k}(0)=1$. Moreover,  if $k > -1$ then $F_{k}$ will be decreasing concave function, and  if $k<-1$ then $F_{k}$ will be increasing convex function. 
% moreover  $F_{k}\leq 1$, it is decreasing, concave, $F_{k}(0)=1$ and $F_{k}(t) = -[t H'_{k}(R_{k})]^{1+\frac{1}{k}} + o(t^{1+\frac{1}{k}})$ as $t \to \infty$.

% $\frac{H'_{k}(q)}{H_{k}(q)} : (R_{k}, \infty) \to (0, \infty)$ is onto and it is decreasing therefore it has a unique inverse.
%By continuity we extend $F_{k}$ at $0$ as follows $F_{k}(0)=1$. Later we will see that $ \frac{H_{k+1}(q)}{H^{1+\frac{1}{k}}_{k}(q)} \leq 1$ for $q \in (R_{k}, \infty)$,  $F$ is %decreasing and concave.   Thus  we have $F_{k} : [0, \infty) \to (-\infty, 1]$. We notice  that $F_{k}(t) = -[t H'_{k}(R_{k})]^{1+\frac{1}{k}} + o(t^{1+\frac{1}{k}})$ as $t \to \infty$.  

%\begin{align}
%B_{k}(x,y) = x^{1+\frac{1}{k}}\left(1-  \frac{H_{k+1}(q)}{H^{1+\frac{1}{k}}_{k}(q)}\right)  \quad \text{where} \quad \frac{H'_{k}(q)}{H_{k}(q)}=\frac{y}{x}, \quad q \in (c_{k},\infty).
%\end{align}
%We extend $B_{k}$ by continuity for all $x, y \geq 0$ as follows: $B_{k}(x,0)=0$ and $B_{k}(0,y)=[yH'_{k}(c_{k})]^{1+\frac{1}{k}}$. 
%Notice that $B_{k}$ is $(1+\frac{1}{k})$-homogeneous, i.e., $B_{k}(\lambda x, \lambda y) = \lambda^{1+\frac{1}{k}} B_{k}(x,y)$ for any $\lambda \geq 0$.  
One may observe that
\begin{align*}
F_{1}(y)=1-y^{2}; \quad F_{2}(y)=\frac{1}{\sqrt{2}}(2-\sqrt{1+ y^{2}})\sqrt{1+\sqrt{1+y^{2}}}. 
\end{align*}

If $k=0$ then  definition (\ref{bdef}) should be understood in the limiting sense as follows
\begin{align*}
F_{\exp}(H_{-1}(q)) = q\exp\left( \alpha - \int_{1}^{q} H_{-1}(s)ds\right) \quad \text{for all} \quad q \in \mathbb{R}, 
\end{align*}
where 
\begin{align}\label{kap}
\alpha = \int_{1}^{\infty}\left(H_{-1}(s)-\frac{1}{s}\right)ds \approx  - 0.266\ldots . 
\end{align}

\begin{theorem}\label{better}
For any $p \in \mathbb{R} \setminus [0,1]$ and any  smooth bounded $f\geq 0$ with $\int_{\mathbb{R}^{n}} f^{p} d\gamma_{n}<\infty$   we have 
\begin{align}\label{our}
\int_{\mathbb{R}^{n}}f^{p} F_{\frac{1}{p-1}}\left(\frac{|\nabla f|}{f}\right) d\gamma_{n} \leq  \left( \int_{\mathbb{R}^{n}} f d\gamma_{n} \right)^{p}.
\end{align}
The inequality is reversed if $p \in (0,1)$. 
%The cases $p=1, \pm \infty$ should be understood in the limiting sense. 
%\begin{align*}
%\int_{\mathbb{R}^{n}} f^{p} d\gamma_{n}  - \left( \int_{\mathbb{R}^{n}} f d\gamma_{n} \right)^{p} \leq \int_{\mathbb{R}^{n}}f^{p}\left(1-F_{k}\left(\frac{|\nabla f|}{f}\right) \right)d\gamma_{n}
%\end{align*}
\end{theorem}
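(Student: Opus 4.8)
\emph{Proof strategy.} The plan is to run the Ornstein--Uhlenbeck semigroup and track the functional on the left-hand side of \eqref{our} along the flow. Write $L=\Delta-x\cdot\nabla$ for the generator, let $P_t=e^{tL}$ be the associated semigroup (given by Mehler's formula), and set $u=u(\cdot,t)=P_tf$, so that $\partial_t u=Lu$, $u|_{t=0}=f$, and $u\to\int_{\mathbb R^n}f\,d\gamma_n$ as $t\to\infty$. Put $k=\tfrac1{p-1}$ and
\begin{align*}
\Psi(t)=\int_{\mathbb R^n} M\big(u,|\nabla u|\big)\,d\gamma_n,\qquad M(x,y)=x^{p}F_k\!\left(\frac yx\right),
\end{align*}
so that $\Psi(0)$ is the left-hand side of \eqref{our}. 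Since $F_k(0)=1$ (recorded above) and $\nabla P_tf=e^{-t}P_t(\nabla f)\to 0$ while $P_tf\to\int f\,d\gamma_n$, we get $\Psi(\infty)=\big(\int f\,d\gamma_n\big)^{p}$, the right-hand side of \eqref{our}. Thus \eqref{our} is equivalent to the monotonicity statement $\Psi(0)\le\Psi(\infty)$, and it suffices to show $\Psi'(t)\ge0$ for $t>0$ (and $\Psi'(t)\le0$ in the reversed range $p\in(0,1)$).

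\emph{The derivative.} I would differentiate under the integral and integrate by parts, using the self-adjointness relation $\int gLh\,d\gamma_n=-\int\nabla g\cdot\nabla h\,d\gamma_n$ together with the commutation $\nabla L=L\nabla-\nabla$ (equivalently $\nabla P_tf=e^{-t}P_t\nabla f$). Writing $\phi(u,\nabla u)=M(u,|\nabla u|)$, one has
\begin{align*}
\Psi'(t)=\int_{\mathbb R^n}\Big[\phi_u\,Lu+\nabla_\xi\phi\cdot\big(L\nabla u-\nabla u\big)\Big]d\gamma_n,
\end{align*}
and after moving each $L$ by integration by parts this becomes
\begin{align*}
\Psi'(t)=\int_{\mathbb R^n}\mathcal Q\big(\nabla u,\nabla^2u\big)\,d\gamma_n,
\end{align*}
where $\mathcal Q$ is, pointwise, a quadratic form in the Hessian $\nabla^2u$ whose coefficients are the second derivatives of $M$ (equivalently $F_k,F_k',F_k''$ evaluated at $s=|\nabla u|/u$), plus the lower-order gradient terms from the drift $-\nabla_\xi\phi\cdot\nabla u$; here $p$-homogeneity of $M$ keeps the bookkeeping finite. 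As a sanity check, for $p=2$ one has $F_1(s)=1-s^2$, $M(x,y)=x^2-y^2$, and the computation collapses to $\Psi'(t)=2\int|\nabla^2u|^2\,d\gamma_n\ge0$, recovering the Poincar\'e inequality \eqref{poincare}.

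\emph{The main obstacle.} The heart of the proof is the pointwise inequality $\mathcal Q\ge0$ for $p\in\mathbb R\setminus[0,1]$ (i.e.\ $k>-1$), and $\mathcal Q\le0$ for $p\in(0,1)$ (i.e.\ $k<-1$). I would fix a point, choose coordinates with $e_1=\nabla u/|\nabla u|$, and split the Hessian into the scalar $a=\partial_{11}u$, the mixed vector $(\partial_{1j}u)_{j\ge2}$, and the orthogonal block $(\partial_{ij}u)_{i,j\ge2}$; optimizing $\mathcal Q$ over these independent entries reduces the sign condition to a single second-order differential inequality for $F_k$. This is exactly where the Hermite construction enters: differentiating the defining relation \eqref{bdef} and using the recurrences $H_k'=kH_{k-1}$ and $H_{k+1}=xH_k-H_k'$ together with \eqref{hermite} shows that $F_k$ satisfies
\begin{align*}
F_k''\Big[(1+k)\,sF_k+k^2F_k'\Big]=s\,(F_k')^2,\qquad s\ge0,
\end{align*}
and that $F_k$ is decreasing and concave for $k>-1$, increasing and convex for $k<-1$. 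This identity is precisely what turns the optimized $\mathcal Q$ into a nonnegative (respectively nonpositive) perfect square: the Hermite profile renders the relevant coefficient matrix degenerate, so equality propagates along the extremizers built from $H_k$ of an affine function, and the overall sign is then fixed by the sign of $1+k$. I expect the verification of this degeneracy---that the displayed ODE is equivalent to the optimized $\mathcal Q$ having the correct sign---to be the main obstacle.

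\emph{Conclusion and technicalities.} Finally I would justify the analytic steps. Differentiation under the integral and the integrations by parts are legitimized by the smoothing and decay of $P_t$ (using boundedness of $f$, the hypothesis $\int f^{p}d\gamma_n<\infty$, and $|\nabla P_tf|\le e^{-t}P_t|\nabla f|$); the boundary value $\Psi(\infty)=\big(\int f\,d\gamma_n\big)^p$ follows by dominated convergence once $M(u,|\nabla u|)$ is controlled near $s=0$ via $F_k\in C^2$ with $F_k(0)=1$. Since $F_k$ acts on $|\nabla u|$, which is non-smooth on $\{\nabla u=0\}$, I would run the argument on the regularized integrand $M\big(u,\sqrt{|\nabla u|^2+\delta^2}\big)$ and let $\delta\to0$, or work on the open set $\{\nabla u\neq0\}$ and check that the critical set contributes nothing. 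Integrating $\Psi'(t)\ge0$ from $0$ to $\infty$ then yields \eqref{our}, and the reversed inequality for $p\in(0,1)$ follows verbatim from $\mathcal Q\le0$ in that range.
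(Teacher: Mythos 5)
Your proposal is essentially the paper's own proof with the black box opened: the paper verifies exactly your pointwise condition for $M(x,y)=x^{p}F_{k}(y/x)$ --- your displayed ODE is, after the substitution $p=1+\tfrac1k$, identical to the paper's determinant identity (\ref{determinant}), which the paper shows holds with \emph{equality}, so that the sign of the $2\times2$ matrix (\ref{matrica}) is decided by the trace, whose sign is $\mathrm{sign}(F_{k}')=-\mathrm{sign}(k+1)$ --- and then invokes Theorem~1 of \cite{IV}, whose proof is precisely your Ornstein--Uhlenbeck interpolation $\Psi'(t)\ge0$; your $p=2$ sanity check and the commutation $\nabla L=L\nabla-\nabla$ are correct. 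The one step your outline genuinely underestimates is the claim that differentiating (\ref{bdef}) with the recurrences "shows" $F_{k}$ is decreasing concave for $k>-1$ and increasing convex for $k<-1$: from (\ref{dva}) the sign of $F_{k}''$ (and even the well-definedness of $F_{k}$, i.e.\ the injectivity of $q\mapsto|H_{k}'/H_{k}|$ on $(R_{k},\infty)$, via Corollary~\ref{cor}) hinges on the Tur\'an-type inequality $H_{k}^{2}-H_{k-1}H_{k+1}>0$ for \emph{arbitrary real} $k$ on $(L_{k},\infty)$, which is not a formal consequence of the recurrences; the paper had to prove it separately (Lemma~\ref{turl}) for lack of a reference beyond integer $k$, and the same inequality is what makes positive the bracket multiplying $F'$ in the trace. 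Two smaller points: the $\delta$-regularization of $|\nabla u|$ is unnecessary, since $F_{k}(t)=1-\tfrac{p(p-1)}{2}t^{2}+O(t^{4})$ makes $M(x,\sqrt{y})\in C^{2}$, which is the smoothness the machinery actually requires; and the limiting argument needs the paper's split treatment ($f+\varepsilon$ with $\varepsilon\to0$, dominated convergence via $|F(t)|\le C_{1}+C_{2}t^{p}$ for $p>1$, monotone convergence using the monotonicity (\ref{hh4}) for $p<1$), not just dominated convergence near $s=0$ as you suggest.
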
 
The theorem  improves  Beckner's inequality (\ref{beckner}). This  will follow by taking the first two nonzero Taylor terms of $F_{\frac{1}{p-1}}(t)$ as its lower estimate.% in a neighborhood of the point $t=0$.   
\begin{proposition}\label{prob}
We have pointwise improvement in  Beckner's inequality (\ref{beckner}), i.e., 
\begin{align}\label{bbeckner}
1-\frac{p(p-1)}{2}t^{2} \leq  F_{\frac{1}{p-1}}\left(t\right)    \quad \text{for all} \quad t \geq 0, \; p \in (1,2].
\end{align}
\end{proposition}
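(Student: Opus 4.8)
The plan is to pass to the parameter $k=\frac{1}{p-1}$, which for $p\in(1,2]$ satisfies $k\ge 1$ (so $F_k$ is the decreasing concave branch), and to record the algebraic identity $\frac{p(p-1)}{2}=\frac{k+1}{2k^2}$. Thus (\ref{bbeckner}) is equivalent to $F_k(t)\ge P(t)$ on $[0,\infty)$, where $P(t):=1-\frac{k+1}{2k^2}t^2$. Since $F_k(0)=1=P(0)$, I would set $D:=F_k-P$ and aim for the one-sided statement $D'\ge 0$, whence $D\ge D(0)=0$. The point of this reduction is that $P$ is exactly the second-order Taylor polynomial of $F_k$ at the origin, so that $D$ vanishes to high order there and all the content sits in controlling the sign of $D'$.

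To compute $D'$ I would differentiate the defining relation (\ref{bdef}) along the parametrization $q\mapsto t=t(q):=H_k'(q)/H_k(q)$, which by the theory of Section~\ref{prf} is a decreasing bijection of $(R_k,\infty)$ onto $(0,\infty)$ with $t\to 0^+$ as $q\to\infty$. Using the real-index recurrences $H_{k+1}=qH_k-H_k'$ and $H_k'=kH_{k-1}$, the right-hand side of (\ref{bdef}) becomes $(q-t)H_k^{-1/k}$; differentiating in $q$ and dividing by $t'(q)=-(t^2-qt+k)$ (a Riccati relation coming from (\ref{hermite})), the quadratic factor cancels and leaves the clean formula
\[
F_k'(t)=-\frac{k+1}{k}\,H_k(q)^{-1/k}.
\]
In particular $F_k'(0)=0$, so $D'(0)=0$. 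Combining this with $P'(t)=-\frac{k+1}{k^2}t$ and $t/k=H_{k-1}(q)/H_k(q)$ gives
\[
D'(t)=\frac{k+1}{k}\left(\frac{H_{k-1}(q)}{H_k(q)}-H_k(q)^{-1/k}\right),
\]
so that, after clearing $H_k(q)>0$ and raising to the $k$-th power, $D'\ge 0$ is equivalent to
\[
g(q):=k\log H_{k-1}(q)-(k-1)\log H_k(q)\ \ge\ 0,\qquad q\in(R_k,\infty).
\]

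To establish $g\ge 0$ I would study its monotonicity. Differentiating and using $H_m'=mH_{m-1}$,
\[
g'(q)=k(k-1)\,\frac{H_{k-2}(q)H_k(q)-H_{k-1}(q)^2}{H_{k-1}(q)H_k(q)}.
\]
On $(R_k,\infty)$ the denominator is positive (the rightmost zeros interlace, so $H_{k-1},H_k>0$ there), and the numerator is negative by the Tur\'an-type inequality $H_{k-1}^2>H_{k-2}H_k$; hence $g'<0$ and $g$ is decreasing. Finally, from the expansion $H_m(q)=q^m(1+O(q^{-2}))$ one gets $g(q)\to 0$ as $q\to\infty$, and a decreasing function with limit $0$ at $+\infty$ is nonnegative. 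This yields $D'\ge 0$, hence $D\ge 0$, i.e.\ (\ref{bbeckner}); the boundary case $k=1$ ($p=2$) is the exact equality $F_1(t)=1-t^2=P(t)$.

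The differentiation producing the formula for $F_k'$ is routine; the genuine input is the Tur\'an-type inequality $H_{k-1}^2>H_{k-2}H_k$ for Hermite functions of real order, valid on $(R_k,\infty)$, together with the interlacing of the rightmost zeros and the monotone reparametrization $q\mapsto t$ — all features of the non-integer index theory that must be borrowed from Section~\ref{prf}. I expect verifying this Tur\'an inequality for real $k$ to be the main obstacle, since for non-integer index it is no longer the classical polynomial Tur\'an inequality and has to be derived from the confluent-hypergeometric representation of $H_k$.
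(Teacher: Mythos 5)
Your proposal is correct, and its skeleton coincides with the paper's up to the crux step. The paper also reduces to the differentiated inequality: since $F_k(0)=1$ and your $P$ is the second-order Taylor polynomial of $F_k$ at the origin, it suffices to show $F_k'(t)+p(p-1)t\ge 0$, and via formula (\ref{odin}) (which is exactly the expression $F_k'(t)=-\frac{k+1}{k}H_k(q)^{-1/k}$ you rederive from the Riccati relation) this becomes precisely your inequality $H_{k-1}(q)\ge H_k(q)^{(k-1)/k}$ on $(R_k,\infty)$, i.e.\ $g\ge 0$. Where you genuinely diverge is in proving this inequality. The paper disposes of it with a self-referential one-liner: by (\ref{bdef}) with index $k-1$ it is literally the statement $F_{k-1}(t)\le 1$, which follows from $F_{k-1}(0)=1$ and $F_{k-1}'<0$ (formula (\ref{odin}) at index $k-1$, legitimate since $k-1>0$ and $(R_k,\infty)\subset(R_{k-1},\infty)$ by the monotonicity of zeros in Lemma~\ref{EMMlemma}). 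You instead prove it directly: $g'(q)=k(k-1)\frac{H_{k-2}H_k-H_{k-1}^2}{H_{k-1}H_k}<0$ by the shifted Tur\'an inequality, and $g(q)\to 0$ as $q\to+\infty$ by (\ref{loran}), so $g\ge 0$. Both arguments are sound, and the input you feared would be ``the main obstacle''---Tur\'an's inequality for real order---requires no new work: it is exactly Lemma~\ref{turl} of the paper applied with $k\mapsto k-1$, valid on $[L_{k-1},\infty)\supset(R_k,\infty)$, with the degenerate case $k=1$ killed by the factor $k(k-1)$ (matching the equality case $F_1(t)=1-t^2$ at $p=2$). As for what each route buys: the paper's is shorter and recycles machinery already established (the sign of $F'$ for every positive index), whereas yours is self-contained at this step and makes the reliance on Tur\'an explicit rather than implicit---note that the paper's $F_{k-1}'<0$ itself ultimately rests on Tur\'an through Corollary~\ref{cor}, so the two proofs consume the same underlying special-function facts, just packaged differently.
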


The improvement will be essential when $t \to \infty$. For example, it will become clear in the next section that as $t \to \infty$ we have 
\begin{align}
&F_{\frac{1}{p-1}}(t) \sim -t^{p} \left( H'_{\frac{1}{p-1}}(R_{\frac{1}{p-1}})\right)^{1-p} \quad \text{for} \quad p>1; \label{a1}\\
&F_{\frac{1}{p-1}}(t) \sim \left( \frac{p}{1-p}\right) \left(\frac{e^{t^{2}/2} \, \sqrt{2 \pi}}{t \Gamma(\frac{1}{1-p}) }\right)^{1-p} \quad \text{for} \quad p<1, \quad p \neq 0. \label{a2}
\end{align}

Our theorem interpolates several inequalities. If $p\to 1+$ then (\ref{our}) gives log-Sobolev inequality. If $p=2$ then (\ref{our}) provides us with Poincar\'e inequality. If $p\to \pm \infty$ then we obtain \emph{$e$-Sobolev inequality}: 
\begin{corollary}\label{e-sob}
For any smooth bounded $f$ we have 
\begin{align*}
\int_{\mathbb{R}^{n}} \exp(f)\,  F_{\exp}(|\nabla f|) d\gamma_{n} \leq \exp\left(  \int_{\mathbb{R}^{n}} f d\gamma_{n}\right). 
\end{align*}
\end{corollary}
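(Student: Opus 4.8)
The plan is to derive the corollary from Theorem~\ref{better} by choosing a one-parameter family of test functions and letting the Beckner exponent run to infinity. Given a smooth bounded $f$, for $p\in\mathbb{R}\setminus[0,1]$ set $g:=\exp(f/p)$. Then $g>0$ is smooth and bounded, $g^{p}=\exp(f)$ is integrable against $\gamma_{n}$, and $|\nabla g|/g=|\nabla f|/|p|$. Since $p\notin[0,1]$, inequality (\ref{our}) applies with the $\le$ sign, giving
\begin{align*}
\int_{\mathbb{R}^{n}} \exp(f)\, F_{\frac{1}{p-1}}\!\left(\frac{|\nabla f|}{|p|}\right) d\gamma_{n} \;\le\; \left(\int_{\mathbb{R}^{n}} \exp(f/p)\, d\gamma_{n}\right)^{p}.
\end{align*}
I would then let $p\to+\infty$; the case $p\to-\infty$, i.e.\ $k:=\frac{1}{p-1}\to 0^{-}$, is identical and yields the same limit.

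For the right-hand side the limit is elementary: since $f$ is bounded, $\int \exp(f/p)\,d\gamma_{n}=1+\frac{1}{p}\int f\,d\gamma_{n}+O(p^{-2})$, so raising to the power $p$ produces $\exp\!\big(\int_{\mathbb{R}^{n}} f\,d\gamma_{n}\big)$ in the limit. The real work is on the left-hand side, where I must establish the pointwise convergence
\begin{align*}
F_{\frac{1}{p-1}}\!\left(\frac{|\nabla f|}{|p|}\right) \longrightarrow F_{\exp}(|\nabla f|), \qquad p\to\pm\infty,
\end{align*}
which is exactly the $k\to 0$ limiting definition of $F_{\exp}$ recorded after (\ref{bdef}), with $k=\frac{1}{p-1}$.

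To establish this convergence I would parametrize by the variable $q$ in (\ref{bdef}). Using the defining properties of the Hermite functions (Section~\ref{prf}), as $k\to 0$ one has $H_{k}\to H_{0}=1$ and $H_{k\pm1}\to H_{\pm1}$, so the argument $|H_{k}'(q)/H_{k}(q)|$ behaves like $|k|\,|H_{-1}(q)|$; matching this with $|\nabla f|/|p|=\frac{|k|}{1+k}|\nabla f|$ forces $H_{-1}(q)\to|\nabla f|$, i.e.\ $q\to q_{*}$ with $H_{-1}(q_{*})=|\nabla f|$. For the value, writing $H_{k}(q)=1+k\,\partial_{k}H_{k}(q)|_{k=0}+o(k)$ gives $H_{k}(q)^{1/k}\to \exp\!\big(\partial_{k}H_{k}(q)|_{k=0}\big)$; differentiating the ladder relation $\partial_{q}H_{k}=kH_{k-1}$ in $q$ shows that $\partial_{k}H_{k}(q)|_{k=0}$ has $q$-derivative $H_{-1}(q)$, while the growth normalization $H_{k}(x)=x^{k}+o(x^{k})$ pins the additive constant so that $\partial_{k}H_{k}(q)|_{k=0}=\int_{1}^{q}H_{-1}(s)\,ds-\alpha$ with $\alpha$ the number in (\ref{kap}). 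Hence
\begin{align*}
\frac{H_{k+1}(q)}{H_{k}^{1+1/k}(q)} \longrightarrow q_{*}\exp\!\left(\alpha - \int_{1}^{q_{*}} H_{-1}(s)\,ds\right) = F_{\exp}(H_{-1}(q_{*}))=F_{\exp}(|\nabla f|),
\end{align*}
which is the claimed limit.

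Finally I would interchange the limit with the integral by dominated convergence. Because $f$ and $|\nabla f|$ are bounded, the argument $|\nabla f|/|p|$ ranges over a fixed compact subset of $[0,\infty)$ shrinking to $0$, on which the convergence $F_{\frac{1}{p-1}}(\cdot)\to F_{\exp}(\cdot)$ is locally uniform (the $F_{k}$ are monotone with $F_{k}(0)=1$), so the integrands are uniformly bounded and $\exp(f)$ is a fixed bounded dominating weight against the probability measure $\gamma_{n}$. Passing to the limit on both sides yields the corollary. The main obstacle is precisely this last interchange together with the uniform-in-$k$ control of $F_{k}$ near $k=0$: one needs the regularity of $H_{k}$ in the index $k$ and of $H_{k}(q)^{1/k}$ to be uniform enough on the relevant $q$-range to upgrade the pointwise convergence above to a genuine dominating bound.
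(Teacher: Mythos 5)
Your proposal is correct and takes essentially the same route as the paper's own second proof of Corollary~\ref{e-sob}: the paper likewise substitutes $f^{p}=e^{g}$ in (\ref{our}), sends $p\to+\infty$ so that $F_{\exp}(t)=\lim_{k\to 0+}F_{k}(tk)$, and evaluates $\lim_{k\to 0+}H_{k}^{-1/k}=\exp\left(-\int H_{-1}\right)$ by observing that $\partial_{k}H(x,k)|_{k=0}$ is an antiderivative of $H_{-1}$, pinning the additive constant $\alpha$ (you do this via the normalization $H_{k}(x)=x^{k}+o(x^{k})$, the paper via the requirement $F_{\exp}(0)=1$ as $q\to\infty$ --- the two are equivalent). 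Two cosmetic remarks: your phrase ``differentiating the ladder relation $\partial_{q}H_{k}=kH_{k-1}$ in $q$'' should read ``in $k$'' (one differentiates the recurrence in the index and then sets $k=0$), and your explicit dominated-convergence justification of the interchange of limit and integral is a point the paper treats only implicitly, so it is a welcome addition rather than a deviation.
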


Finally if $p \to 0$ we obtain \emph{negative log-Sobolev} inequality:
\begin{corollary}\label{nlog}
For any smooth bounded $f\geq 0$ with $\int_{\mathbb{R}^{n}} \ln f d\gamma_{n}>-\infty$   we have 
\begin{align*}
\int_{\mathbb{R}^{n}} - \ln f d\gamma_{n}  + \ln \left(\int_{\mathbb{R}^{n}} f d\gamma_{n} \right) \leq  \int_{\mathbb{R}^{n}}- F_{-\ln}\left(\frac{|\nabla f|}{f}\right)d\gamma_{n}
\end{align*}
where $F_{-\ln}(t)$ is defined as follows 
\begin{align*}
F_{-\ln} \left( \frac{H_{-2}(x)}{H_{-1}(x)}\right) = \int_{1}^{x}H_{-1}(s)ds-c + \ln H_{-1}(x), \quad x \in \mathbb{R}.
\end{align*}
\end{corollary}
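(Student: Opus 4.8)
The plan is to obtain Corollary~\ref{nlog} as the limiting case $p\to 0$ of Theorem~\ref{better}, exactly as the authors announce. The organizing observation is that $p=0$ corresponds to $k=\tfrac{1}{p-1}=-1$, and there the defining relation (\ref{bdef}) degenerates: since $1+\tfrac1k=0$ and $H_{k+1}=H_0\equiv 1$ at $k=-1$, the right-hand side of (\ref{bdef}) equals $H_0(q)/H_{-1}^{0}(q)=1$, so $F_{-1}\equiv 1$ on $[0,\infty)$. Consequently $F_{\frac{1}{p-1}}$ tends pointwise to the constant $1$, and the entire content of the corollary is encoded in the coefficient of $p$ in the expansion of (\ref{our}) about $p=0$.

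First I would recast Theorem~\ref{better} as a statement about the sign of a difference quotient. For $p<0$ the inequality (\ref{our}) says the bracket $\left(\int f\,d\gamma_n\right)^p-\int f^p F_{\frac{1}{p-1}}\!\left(\tfrac{|\nabla f|}{f}\right)d\gamma_n$ is $\ge 0$, while for $p\in(0,1)$ the reversed inequality makes it $\le 0$. Dividing by $p$ (negative to the left of $0$, positive to the right) turns both cases into the single assertion
\[
Q(p):=\frac{1}{p}\left[\left(\int_{\mathbb R^n} f\, d\gamma_n\right)^p-\int_{\mathbb R^n} f^p\, F_{\frac{1}{p-1}}\!\left(\frac{|\nabla f|}{f}\right) d\gamma_n\right]\le 0 .
\]
Letting $p\to 0$ (most cleanly through $p\in(0,1)$, where $f^p$ stays bounded) gives $\lim_{p\to0}Q(p)\le 0$, and the task reduces to evaluating this limit.

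Next I would expand to first order in $p$, using $f^p=1+p\ln f+O(p^2)$, $\left(\int f\right)^p=1+p\ln\int f+O(p^2)$, and, since $\tfrac{1}{p-1}=-1-p+O(p^2)$ together with $F_{-1}\equiv1$, the expansion $F_{\frac{1}{p-1}}(t)=1-p\,\partial_k F_k(t)\big|_{k=-1}+O(p^2)$. The constant terms cancel in $Q(p)$, leaving
\[
\lim_{p\to0}Q(p)=\ln\!\int_{\mathbb R^n} f\,d\gamma_n-\int_{\mathbb R^n}\ln f\,d\gamma_n+\int_{\mathbb R^n} \partial_k F_k\!\left(\frac{|\nabla f|}{f}\right)\Big|_{k=-1} d\gamma_n\le 0 ,
\]
which is precisely the asserted inequality once one identifies $F_{-\ln}=\partial_k F_k\big|_{k=-1}$. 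To verify this identification I would differentiate (\ref{bdef}) in $k$: writing $F_k(y(q,k))=g(q,k)$ with $y=|H_k'/H_k|$ and $g=H_{k+1}H_k^{-1-1/k}$, one gets $\partial_k F_k(y)+F_k'(y)\,\partial_k y=\partial_k g$, and at $k=-1$ the term $F_{-1}'\equiv 0$ drops out, so $\partial_k F_k\big|_{k=-1}=\partial_k g\big|_{k=-1}$ evaluated along $y(q,-1)$. The relation $H_k'=kH_{k-1}$ gives $y(q,-1)=|H_{-2}(q)/H_{-1}(q)|$, and logarithmic differentiation of $g$ (where $1+\tfrac1k=0$ and $g\equiv1$ at $k=-1$) leaves $\partial_k g\big|_{k=-1}=\partial_k H_k(q)\big|_{k=0}+\ln H_{-1}(q)$. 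Finally, differentiating the Hermite equation (\ref{hermite}) in $k$ and setting $k=0$ (where $H_0\equiv1$) shows $u:=\big(\partial_k H_k\big|_{k=0}\big)'$ solves $u'-xu=-1$, the same first-order equation as $H_{-1}$; the slow-growth normalization excludes the homogeneous solution $e^{x^2/2}$, forcing $u=H_{-1}$ and hence $\partial_k H_k(q)\big|_{k=0}=\int_1^q H_{-1}(s)\,ds$ up to a constant absorbed into $c$. This reproduces the stated $F_{-\ln}$ and closes the argument.

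I expect the principal difficulty to be analytic rather than algebraic: justifying the interchange of $\lim_{p\to0}$ with the integral, i.e. producing a $p$-uniform integrable majorant for the difference quotient $\tfrac1p\big(f^p F_{\frac{1}{p-1}}(|\nabla f|/f)-1\big)$ near $p=0$ (the pointwise expansion breaks down where $f\to 0$, which is exactly why the hypothesis $\int \ln f\,d\gamma_n>-\infty$ is needed), together with the smooth dependence of $F_k$ and of the Hermite functions $H_k$ on the parameter $k$ near $k=-1$ that the first-order expansion and the $k$-differentiation of (\ref{bdef}) silently use.
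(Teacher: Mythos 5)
Your argument is correct and follows essentially the same route as the paper's own (second) proof of Corollary~\ref{nlog}: the authors likewise send $p\to 0$ in (\ref{our}), compare the terms of order $p$, and identify $F_{-\ln}(t)=\partial_k F_k(t)\big|_{k=-1}$ by equating terms of order $k+1$ in the defining relation (\ref{bdef}), computing $\partial_k H(x,k)\big|_{k=0}$ as an antiderivative of $H_{-1}$. The only minor deviations are that you pin down $\bigl(\partial_k H\big|_{k=0}\bigr)'=H_{-1}$ by differentiating the Hermite equation (\ref{hermite}) in $k$ and excluding the homogeneous solution $e^{x^{2}/2}$ by growth, where the paper instead differentiates the recurrence (\ref{pirveli}), and that the limit interchange you candidly flag is left equally informal in the paper's second derivation (its first derivation handles limits via monotone convergence).
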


 It is worth mentioning that the current paper provides with  estimates of $\Phi$-entropy (see \cite{Chaf}):
 \begin{align*}
 \boldsymbol{\mathrm{Ent}}_{\gamma_{n}}^{\Phi}(f) \stackrel{\mathrm{def}}{=} \int_{\mathbb{R}^{n}}\Phi(f)d\gamma_{n} - \Phi\left(\int_{\mathbb{R}^{n}} f d\gamma_{n} \right)
 \end{align*}
 for the following fundamental examples:
 \begin{align*}
  &\Phi(x) = x^{p} \quad \text{for}  \quad p \in \mathbb{R}\setminus [0,1] \quad \text{Theorem~\ref{better}};\\
  &\Phi(x) = -x^{p} \quad \text{for}  \quad p \in (0,1) \quad \text{Theorem~\ref{better}};\\
  &\Phi(x) = e^{x},  \quad \text{Corollary~\ref{e-sob}, or $p \to \pm \infty$ in Theorem~\ref{better}};\\
  &\Phi(x) = -\ln x,  \quad \text{Corollary~\ref{nlog}, or $p \to 0$ in Theorem~\ref{better}};\\
  &\Phi(x) = x\ln x,  \quad \text{$p \to 1$ in Theorem~\ref{better}}.
 \end{align*}
%In fact we will see that the left hand side of (\ref{bbeckner}) represents the first two nonzero terms of the Tylor series of $F_{\frac{1}{p-1}}(t)$ at zero. 
%Since for $p>2$  function $F_{k}$ decays faster then concave parabola because of $F_{k}(t) \sim  - [t H'_{k}(c_{k})]^{p}$ as $t \to \infty$ then we see that the right lower bound of $F_{k}$ should be precise at infinity rather than at zero. We have 
%\begin{proposition}
%For any $p \geq 2$ we have 
%\begin{align*}
%F_{k}(t) \geq -(H'(c_{k})t)^{p}
%\end{align*}
%\end{proposition}
%The following proposition says that if $\frac{|\nabla f|}{f}\leq c_{\frac{1}{p-1}}$ then we have the reverse to Jensen's inequality 
%\begin{proposition}
%Let $p>1$. Assume that $\frac{| \nabla f|}{f} \leq \ell \cdot c_{\frac{1}{p-1}}$ for some $0<\ell \leq 1$. Then 
%\begin{align*}
%(1-\ell) \int_{\mathbb{R}^{n}} f^{p} d\gamma_{n} \leq \left(\int_{\mathbb{R}^{n}} f d\gamma_{n}\right)^{p}
%\end{align*}
%\end{proposition}
\section{The proof of the theorem}\label{prf}
The proof of the theorem amounts to check that the  real valued function 
\begin{align}\label{f1}
M(x,y)=x^{p}F_{k}\left( \frac{y}{x} \right)
\end{align}
 defined on $[\varepsilon,\infty)\times [0, \infty)$ for any $\varepsilon>0$ obeys necessary smoothness condition, it has a boundary condition $M(x,0)=x^{p}$ and it  satisfies the following partial differential inequality
\begin{align}\label{matrica}
\begin{pmatrix}
M_{xx}+\frac{M_{y}}{y}  & M_{xy}\\
M_{xy} & M_{yy}
\end{pmatrix} \leq 0,
\end{align}
with  reversed inequality in (\ref{matrica}) if $p\in (0,1)$. Then by Theorem~1 in \cite{IV} we obtain that 
\begin{align*}
\int_{\mathbb{R}^{n}} f^{p}F_{k}\left( \frac{|\nabla f|}{f}\right) d\gamma_{n} = \int_{\mathbb{R}^{n}} M(f, |\nabla f| )d\gamma_{n} \leq M\left(\int_{\mathbb{R}^{n}}f d\gamma_{n},0 \right) = \left( \int_{\mathbb{R}^{n}} fd\gamma_{n}\right)^{p} 
\end{align*}
for any smooth bounded $f \geq \varepsilon$ which is the statement of the theorem we want to prove (except we need to justify the passage to the limit $\varepsilon \to 0$ and this will be done later). Notice that the inequality is reversed if $p \in (0,1)$, indeed, in this case we should work with  $-M(x,y)$ instead of $M(x,y)$. 

%We should mention that in the statement of Theorem~1 in \cite{IV} it is required that $M_{y}\leq 0$ but we notice that this condition holds automatically. Indeed, since the first diagonal entry of (\ref{matrica}) is nonnegative it follows that $M_{y}(x,0)\leq 0$. On the other hand condition $M_{yy}\leq 0$ implies that $M_{y}\leq 0$. 

Next we will need some tools regarding the Hermite functions $H_{k}$. 

\subsection{Properties of Hermite functions}

$H_{k}$ can be defined  (see \cite{HO}) by 
\begin{align}\label{hyp1}
H_{k}(x) =  -\frac{2^{-k/2} \sin (\pi k)\;   \Gamma(k+1)}{2\pi} \sum_{n=0}^{\infty} \frac{\Gamma((n-k)/2)}{n!} (-x \sqrt{2} )^{n},
\end{align}
or in terms of  the confluent hypergeometric functions (see \cite{LD}) by
\begin{align}
H_{k}(x) =  &\sqrt{\frac{2^{k}}{\pi}}\left[\cos\left( \frac{\pi k}{2}\right) \, \Gamma\left( \frac{k+1}{2}\right)\,  {}_{1}F_{1}\left(-\frac{k}{2}, \frac{1}{2}; \frac{x^{2}}{2}\right)\right. \label{hyp2} \\
&+\left. t\sqrt{2} \sin \left( \frac{\pi k}{2} \right) \, \Gamma\left( \frac{k}{2}+1\right)\,  {}_{1}F_{1}\left(\frac{1-k}{2}, \frac{3}{2}; \frac{x^{2}}{2}\right) \right].\nonumber
\end{align}
If $k$ is a nonnegative integer then one should understand (\ref{hyp1}) and (\ref{hyp2}) in the limiting sense.   Notice the following recurrence properties:\begin{align}
&H'_{k}(x) = k H_{k-1}(x); \label{pirveli}\\
&H_{k+1}(x) = x H_{k}(x) - H'_{k}(x). \label{meore}
\end{align}
These properties follow from (\ref{hyp1}) and the fact that $\Gamma(z+1)=z\Gamma(z)$. 

We also notice that  
  \begin{align*}
H_{k}(x) := e^{x^{2}/4}D_{k}(x),
\end{align*}
 where $D_{k}(x)$ is the \emph{parabolic cylinder function}, i.e., it is the solution of the equation 
\begin{align*}
D''_{k}+\left( k+\frac{1}{2}-\frac{x^{2}}{4}\right)D_{k}=0. 
\end{align*}
Since $H_{k}(x)$ is an entire function in $x$ and $k$ (see \cite{temme} for the parabolic cylinder function) sometimes it will be convenient to write $H(x,k)$ instead of $H_{k}(x)$.  
The precise asymptotic for $x \to +\infty$, $x>0$  and any $k \in \mathbb{R}$ is given as follows 
\begin{align}\label{loran}
H_{k}(x) \sim x^{k} \cdot  \sum_{n=0}^{\infty} (-1)^{n} \frac{(-k)_{2n}}{n! (2x^{2})^{n}}.
\end{align}
 Here $(a)_{n}=1$ if $n=0$ and $(a)_{n}=a(a+1)\ldots(a+n-1)$ if $n>0$.   When $x \to -\infty$  we have 
 \begin{align}\label{loran-}
 H_{k}(x) \sim |x|^{k}\cos(k \pi) \sum_{n=0}^{\infty} (-1)^{n} \frac{(-k)_{2n}}{n! (2x^{2})^{n}}  + \frac{\sqrt{2 \pi}}{\Gamma(-k)} |x|^{-k-1}e^{x^{2}/2}\sum_{n=0}^{\infty} \frac{(1+k)_{2n}}{n! (2x^{2})^{n}}.
 \end{align}
 We refer the reader to \cite{temme, NIST}. For instance, for (\ref{loran}) we can use the asymptotic formula (12.9.1) in \cite{NIST}  for the parabolic cylinder function. To verify (\ref{loran-}) we can express $H_{k}(-x)$ as a  linear combination of two parabolic cylinder functions but having argument $x$ instead of $-x$ (see (12.2.15) in \cite{NIST}), and then we can use (12.9.1) and (12.9.2) in \cite{NIST}. 

Next we will need the result of  Elbert--Muldoon \cite{EMM1}  which describes the behavior of the real zeros of $H_{k}(x)$ for any real $k$. 
 
 \begin{lemma}\label{EMMlemma}
 For $k\leq 0$,  $H_{k}(x)$ has no real zeros, and it is positive on the real axis. For $n<k \leq n+1$, $n=0,1,\ldots, $ $H_{k}(x)$ has $n+1$ real  zeros. Each zero is increasing function of  $k$ on its interval of definition.
 \end{lemma}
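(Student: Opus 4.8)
The lemma has three assertions, and I would handle them with different tools, drawing on the representations and asymptotics already recorded.

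\textbf{Positivity for $k\le 0$.} Here I would exhibit a Laplace-type integral representation. Writing $H_k=e^{x^2/4}D_k$ with $D_k$ the parabolic cylinder function and invoking the standard Laplace-transform representation of $D_k$ (valid precisely for $k<0$), one obtains
\[
H_k(x)=\frac{1}{\Gamma(-k)}\int_0^\infty t^{-k-1}e^{-t^2/2-xt}\,dt,\qquad k<0.
\]
Since $\Gamma(-k)>0$ and the integrand is strictly positive for every real $x$, this forces $H_k(x)>0$ on $\mathbb{R}$; the case $k=0$ is trivial because $H_0\equiv 1$. Hence $H_k$ has no real zeros for $k\le 0$.

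\textbf{Monotonicity of the zeros.} I would put the Hermite equation (\ref{hermite}) into its self-adjoint form against the Gaussian weight, namely $(e^{-x^2/2}H_k')'+k\,e^{-x^2/2}H_k=0$, so that $k$ acts as a spectral parameter multiplying the positive weight $e^{-x^2/2}$. Let $x_0=x_0(k)$ be a real zero of $H_k$; it is automatically simple, since a solution of a second-order linear ODE with a double zero vanishes identically while $H_k\not\equiv 0$. Differentiating in $k$, setting $v=\partial_k H_k$, and applying the Lagrange identity to the pair $(H_k,v)$ on $[x_0,\infty)$ — where the weighted boundary term vanishes because $H_k\sim x^k$ from (\ref{loran}) forces super-exponential decay of $e^{-x^2/2}H_k$ and of its $k$-derivative — I obtain
\[
\dot x_0(k)=\frac{e^{x_0^2/2}\int_{x_0}^{\infty}e^{-x^2/2}H_k^2\,dx}{\big(H_k'(x_0)\big)^2}>0 .
\]
Thus every zero is a strictly increasing function of $k$, which is the final assertion.

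\textbf{Counting the zeros.} Here I would argue by continuation in $k$. Since $H(x,k)$ is jointly analytic and its real zeros are simple, the zeros are real-analytic in $k$ and cannot collide (a collision would create a double zero), so the total number of real zeros changes only when a zero escapes to, or enters from, infinity. The monotonicity step forbids leftward motion, and $H_k(x)\sim x^k>0$ as $x\to+\infty$ keeps the rightmost zero finite, so the count can only increase, and only by a zero \emph{entering from $-\infty$}. To locate such entries I would use the two-term asymptotics (\ref{loran-}): the coefficient $\frac{\sqrt{2\pi}}{\Gamma(-k)}$ of the exponentially dominant term $|x|^{-k-1}e^{x^2/2}$ vanishes exactly at the nonnegative integers and is otherwise nonzero. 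Anchoring the count at $k=n$, where $H_n$ is the classical probabilists' Hermite polynomial with exactly $n$ simple real zeros, the competition between this small-coefficient exponential term and the algebraic term shows that for $k$ just above $n$ a single sign change appears far out on the negative axis — one new zero that recedes to $-\infty$ as $k\downarrow n$ and merges into the polynomial zero set at $k=n+1$. This yields exactly $n+1$ zeros for $n<k\le n+1$.

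\textbf{Main obstacle.} The delicate point is precisely this last step: proving that \emph{exactly one} new zero (neither none nor two) enters from $-\infty$ across each integer, and that no zero is simultaneously lost in the bulk. Making this rigorous requires combining the sign and smallness of $1/\Gamma(-k)$ near integers with a quantitative use of (\ref{loran-}) to pin down the location of the entering zero, together with the non-collision and monotonicity facts from the previous steps to guarantee that the count is otherwise conserved.
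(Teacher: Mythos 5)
Note first that the paper does not actually prove this lemma: its ``proof'' is a citation to Theorem~3.1 of \cite{EMM1}, so you are supplying an argument the paper omits. Your first two parts are correct. The Laplace representation $H_k(x)=\Gamma(-k)^{-1}\int_0^\infty t^{-k-1}e^{-t^2/2-xt}\,dt$ for $k<0$ is the standard integral for the parabolic cylinder function (valid exactly in that range, with the right normalization $H_k\sim x^k$ by Watson's lemma), and it does give strict positivity on $\mathbb{R}$, with $k=0$ trivial. The monotonicity argument is also sound: zeros of a nontrivial solution of a second-order linear ODE are simple, the Lagrange identity applied to $H_k$ and $v=\partial_k H_k$ on $[x_0,\infty)$ together with implicit differentiation of $H(x_0(k),k)=0$ yields precisely your formula $\dot x_0=e^{x_0^2/2}\bigl(H_k'(x_0)\bigr)^{-2}\int_{x_0}^\infty e^{-x^2/2}H_k^2\,dx>0$, and this is essentially the method Elbert--Muldoon themselves use. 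The only point you should justify explicitly is the vanishing of the weighted Wronskian at $+\infty$, which requires the asymptotic expansion (\ref{loran}) to be differentiable in $k$ (it is, by uniformity on compact $k$-sets, but that deserves a sentence).

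The genuine gap is in the counting step, and you have correctly diagnosed it yourself. Your continuation argument establishes that the zero count is locally constant on compact subintervals of $(n,n+1)$ (simple zeros cannot collide, uniform dominance of the exponential term in (\ref{loran-}) confines all zeros to a bounded set, and $H_k>0$ far to the right), and locally uniform convergence $H_k\to H_n$ gives $n$ zeros near the zeros of $H_n$ for $k$ slightly above $n$. But the sign analysis of $1/\Gamma(-k)$ only shows that $H_k(x)$ has sign $(-1)^{n+1}$ as $x\to-\infty$ for $n<k<n+1$, which combined with $H_k(+\infty)=+\infty$ yields the correct \emph{parity} of the zero count and forces an \emph{odd} number of far-left zeros to enter as $k$ crosses $n$ --- it does not exclude three or more entering, nor does it by itself pin down what happens at $k\uparrow n+1$, where the coefficient $\sqrt{2\pi}/\Gamma(-k)$ degenerates to zero and the leftmost zero could in principle misbehave. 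Closing this requires a quantitative use of (\ref{loran-}) (e.g., showing $H_k$ has a single sign change on $(-\infty,-X]$ via monotonicity or dominance estimates there), which your ``main obstacle'' paragraph describes but does not carry out. So as written the proposal proves the first and third assertions of the lemma completely, but the count of $n+1$ zeros remains a plausible sketch rather than a proof; since the paper itself delegates exactly this delicate bookkeeping to \cite{EMM1}, the honest options are to cite Elbert--Muldoon for the counting statement or to execute the quantitative far-left analysis in full.
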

 The proof of the lemma is Theorem~3.1 in \cite{EMM1}.   It is explained in the paper that  as $k$ passes through each nonnegative integer $n$ a new leftmost zero appears at $-\infty$ while the right-most zero passes through the largest zero of $H_{k}(x)$. More precise information about the asymptotic  behavior of the zeros as $k \to \infty$ can be found in \cite{EMM2}. 
 
 Further we will need Tur\'an's inequality for $H_{k}$ for any real $k$. 
 \begin{lemma}\label{turl}
 We have the following Tur\'an's inequality:  
 \begin{align}
 H_{k}^{2}(x)-H_{k-1}(x)H_{k+1}(x)> 0 \quad \text{for all} \quad k \in \mathbb{R}, \; x \geq  L_{k} \label{tu1}
 \end{align}
 where $L_{k}$ denotes the leftmost zero of $H_{k}$. If $k\leq 0$ then $L_{k}=-\infty$. 
 \end{lemma}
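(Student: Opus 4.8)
Write $\Delta_k(x)=H_k^2(x)-H_{k-1}(x)H_{k+1}(x)$ for the Tur\'an expression. The plan is to reduce the claimed positivity to the sign of a single integral. First I would produce a \emph{first-order} identity for $\Delta_k$ using only the recurrences (\ref{pirveli}), (\ref{meore}) and the Hermite equation (\ref{hermite}). Differentiating $\Delta_k$ and replacing $H_j'=jH_{j-1}$ everywhere turns $\Delta_k'$ into $(k-1)\bigl(H_kH_{k-1}-H_{k-2}H_{k+1}\bigr)$; substituting $H_{k+1}=xH_k-kH_{k-1}$ and $H_k=xH_{k-1}-(k-1)H_{k-2}$ into $\Delta_k'-x\Delta_k+H_kH_{k-1}$ then collapses every term, yielding
\[
\Delta_k'(x)-x\,\Delta_k(x)=-H_k(x)H_{k-1}(x),\qquad\text{i.e.}\qquad \frac{d}{dx}\!\left(e^{-x^2/2}\Delta_k(x)\right)=-H_k(x)H_{k-1}(x)\,e^{-x^2/2}.
\]
This is a purely algebraic verification valid for every real $k$ (in particular no division by $k$ is needed, so $k=0$ is included).

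Next I would integrate this identity. From the asymptotics (\ref{loran}) one has $H_j(x)\sim x^j$ as $x\to+\infty$, so $\Delta_k$ is polynomially bounded and $e^{-x^2/2}\Delta_k(x)\to0$. Integrating the displayed identity from $x$ to $\infty$ therefore gives the integral representation
\[
e^{-x^2/2}\Delta_k(x)=\int_x^{\infty}H_k(t)\,H_{k-1}(t)\,e^{-t^2/2}\,dt .
\]
Since $e^{-x^2/2}>0$, it suffices to prove that $G(x):=\int_x^{\infty}H_kH_{k-1}e^{-t^2/2}\,dt$ is strictly positive on $[L_k,\infty)$. The point of this representation is that $G'(x)=-H_k(x)H_{k-1}(x)\,e^{-x^2/2}$, so the critical points of $G$ are \emph{exactly} the zeros of $H_k$ together with the zeros of $H_{k-1}$, of which there are only finitely many in $[L_k,\infty)$ by Lemma~\ref{EMMlemma}.

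The decisive observation is that $\Delta_k$ is a signed perfect square at each of these critical points. If $a$ is a zero of $H_k$, then (\ref{meore}) gives $H_{k+1}(a)=-kH_{k-1}(a)$, whence $\Delta_k(a)=kH_{k-1}^2(a)$; if $b$ is a zero of $H_{k-1}$, then $\Delta_k(b)=H_k^2(b)$. Moreover $H_k$ and $H_{k-1}$ have no common zero: a common zero $a$ would force $H_k(a)=H_k'(a)=kH_{k-1}(a)=0$, and uniqueness for the second-order equation (\ref{hermite}) would give $H_k\equiv0$. Hence in each case the surviving factor is nonzero, and for $k>0$ both evaluations are strictly positive. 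Now $G=e^{-x^2/2}\Delta_k$ is monotone between consecutive critical points, is strictly positive at every critical point in $[L_k,\infty)$ (including the left endpoint $L_k$, which is a zero of $H_k$), and tends to $0^+$ at $+\infty$; a monotone piece with positive endpoint values stays positive, so $G>0$ throughout $[L_k,\infty)$, i.e.\ $\Delta_k>0$.

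It remains to treat $k\le0$, where $L_k=-\infty$. There Lemma~\ref{EMMlemma} gives $H_k>0$ and $H_{k-1}>0$ on all of $\mathbb{R}$, so $G'<0$ everywhere and $G$ decreases monotonically to $0$, forcing $G>0$ on $\mathbb{R}$; this disposes of all $k\le0$ at once. I expect the main obstacle to be the oscillatory regime $k>1$, where $H_k$ and $H_{k-1}$ each have several zeros and the integrand of $G$ changes sign repeatedly; this is precisely what the perfect-square evaluation at the critical points resolves, turning a delicate cancellation into a clean sign count. The only routine points left to check carefully are the vanishing of the boundary term at $+\infty$ (immediate from (\ref{loran})) and the finiteness and location of the zeros (Lemma~\ref{EMMlemma}), both already available.
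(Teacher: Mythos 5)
Your proposal is correct and takes essentially the same route as the paper: both arguments study $f(x)=e^{-x^{2}/2}\bigl(H_{k}^{2}(x)-H_{k-1}(x)H_{k+1}(x)\bigr)$, rest on the same key identity $f'(x)=-e^{-x^{2}/2}H_{k}(x)H_{k-1}(x)$, and conclude positivity from $f(+\infty)=0$ together with the strictly positive values $kH_{k-1}^{2}>0$ and $H_{k}^{2}>0$ at the zeros of $H_{k}$ and $H_{k-1}$ (finitely many on $[L_{k},\infty)$ by Lemma~\ref{EMMlemma}), which is exactly the paper's observation that $H_{k-1},H_{k+1}$ have opposite signs at zeros of $H_{k}$ by (\ref{meore}). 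Your packaging via the integral representation and piecewise monotonicity between critical points is a mild streamlining of the paper's version, which instead computes $f''=e^{-x^{2}/2}(H_{k}^{2}-kH_{k-1}^{2})$ to classify local minima and, for $k\le 0$, invokes the $x\to-\infty$ asymptotics (\ref{loran-}) that your argument (like your ODE-uniqueness proof that $H_{k}$ and $H_{k-1}$ share no zero, replacing the paper's iterated recurrence argument) manages to bypass.
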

 The lemma is known as Tur\'an's inequality when $k$ is a nonnegative integer. Unfortunately we could not find the reference in the case when $k$ is different from a positive integer therefore we decided to include the proof of the lemma. 
 
  The following  is  borrowed from  \cite{MT}. 
\begin{proof}
Take $f(x) = e^{-\frac{x^{2}}{2}}(H_{k}^{2}(x)-H_{k-1}(x)H_{k+1}(x))$. Asymptotic formulas (\ref{loran}) and (\ref{loran-}) imply that 
\begin{align}
&\lim_{x \to +\infty} f(x) =0 \quad \text{for all} \quad k \in \mathbb{R};\nonumber\\
&f(x) \sim \sqrt{2 \pi} |x| >0\quad  \text{for} \quad x \to -\infty, \quad k =0;\nonumber\\
&f(x) \sim \frac{2\pi e^{x^{2}/2}}{\Gamma(-k)\Gamma(-k+1)}|x|^{-2k-2} \quad \text{for} \quad x \to -\infty, \quad k \notin \{0\}\cup \mathbb{N}. \label{atinf}
\end{align}
On the other hand notice that 
\begin{align}\label{der}
f'(x) = -e^{-\frac{x^{2}}{2}} H_{k}H_{k-1}.
\end{align}
If $k\leq 0$ then  by Lemma~\ref{EMMlemma} $f'<0$,  and because of the conditions $f(-\infty)=+\infty$ and $f(\infty)=0$ we obtain that $f>0$ on $\mathbb{R}$.
To verify the statement for $k>0$ we notice that 
\begin{align}\label{der1}
f''(x) = e^{-\frac{x^{2}}{2}}(H_{k}^{2} -k H_{k-1}^{2}).
\end{align}
Now we notice that if $H_{k}(c)=0$ then $H_{k-1}(c)\neq 0$. Indeed, assume contrary $H_{k-1}(c)=0$.  Then by (\ref{pirveli}) we have  $H'_{k}(c)=0$ and by (\ref{hermite}) we obtain $H''_{k}(c)=0$,  and again taking derivative in (\ref{pirveli}) we obtain that $H_{k-2}(c)=0$. Repeating this process we obtain that $H_{k-N}(c)=0$ for any large integer $N>0$. But this contradicts to Lemma~\ref{EMMlemma}. 
%Finally if $k$ is a positive integer then we would obtain that $H_{1}(c)=0$ but then $c=0$ because $H_{1}(x)=x$.  We obtain contradiction with $H_{2}(0)\neq 0$. The reason  considering positive integer case and non integer case separately is justified by the fact that   (\ref{pirveli}) does not give us  anything interesting once we reach the position $k=1$ starting from an arbitrary $k \in \mathbb{N}$. 

Thus by (\ref{der}) and (\ref{der1}) we obtain that $c$ is a point of the local minimum of $f$ if and only if $ H_{k-1}(c)=0$. Then $f(c) = e^{-x^{2}/2}H_{k}^{2}(c)>0$.
Finally we obtain that $f : [L_{k}, \infty) \to \mathbb{R}$ is positive on its local minimum points, $f(\infty)=0$ and $f(L_{k}) >0$ (because $H_{k-1}, H_{k+1}$ have opposite signs at zeros of $H_{k}$ by (\ref{meore})). Therefore $f > 0$ on $[L_{k}, \infty) \to \mathbb{R}$ and the lemma is proved. 

 \end{proof}
\begin{remark}
If $k \in \mathbb{N}$  then  $H_{k}$ is the probabilists' Hermite polynomial of degree $k$, so $f(x)$ will be  even and inequality (\ref{tu1}) will hold for all $x \in \mathbb{R}$ which confirms the classical Tur\'an's inequality.   
 However, if $k>0$ but $k \notin \mathbb{N}$  then (\ref{tu1}) fails when $x \to -\infty$ (see (\ref{atinf})).  
 \end{remark}

Finally the next corollary together with Lemma~\ref{EMMlemma} implies that $\left|\frac{H'_{k}}{H_{k}}\right|=\mathrm{sign}(k) \frac{H'_{k}(q)}{H_{k}(q)} $ is positive and decreasing for $q \in (R_{k}, \infty)$ and $k\in \mathbb{R}\setminus\{0\}$. 
\begin{corollary}\label{cor}
For any $x \ge L_{k}$ and any $k \in \mathbb{R}\setminus\{0\}$ we have 
\begin{align*}
\mathrm{sign}[(H'_{k})^{2}-H_{k}H''_{k}]=\mathrm{sign}(k).
\end{align*}
\end{corollary}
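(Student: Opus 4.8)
The plan is to collapse the Wronskian-type quantity $(H_k')^2-H_kH_k''$ into a constant multiple of the Tur\'an expression $H_k^2-H_{k-1}H_{k+1}$, whose strict positivity on $[L_k,\infty)$ is the content of Lemma~\ref{turl}. Everything then reduces to reading off the sign of that constant.

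First I would eliminate the second derivative using the Hermite equation (\ref{hermite}) in the form $H_k''=xH_k'-kH_k$, which gives
\[
(H_k')^2-H_kH_k''=(H_k')^2-xH_kH_k'+kH_k^2.
\]
Next I would use the recurrence (\ref{pirveli}), $H_k'=kH_{k-1}$, to rewrite the right-hand side entirely in terms of $H_k$ and $H_{k-1}$ and factor out $k$:
\[
(H_k')^2-H_kH_k''=k\left(H_k^2-xH_kH_{k-1}+kH_{k-1}^2\right).
\]
Finally I would identify the parenthetical factor with the Tur\'an quantity: combining (\ref{pirveli}) and (\ref{meore}) yields $H_{k+1}=xH_k-kH_{k-1}$, hence $H_{k-1}H_{k+1}=xH_kH_{k-1}-kH_{k-1}^2$ and therefore $H_k^2-xH_kH_{k-1}+kH_{k-1}^2=H_k^2-H_{k-1}H_{k+1}$. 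This produces the clean identity
\[
(H_k')^2-H_kH_k''=k\left(H_k^2-H_{k-1}H_{k+1}\right).
\]
By Lemma~\ref{turl} the factor $H_k^2-H_{k-1}H_{k+1}$ is strictly positive for every $x\ge L_k$, so for $k\neq 0$ the sign of the left-hand side equals $\mathrm{sign}(k)$, which is exactly the claim.

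The only nontrivial point---and hence the main obstacle---is recognizing that the differential equation together with the two recurrences force the exact collapse into $k$ times the Tur\'an form; once this algebraic identity is observed, the corollary is immediate from the already-established Lemma~\ref{turl}, with no asymptotic estimates or case distinctions required.
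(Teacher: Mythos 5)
Your proposal is correct and is essentially the paper's own proof: the paper establishes the corollary by exactly the identity $k\left(H_{k}^{2}-H_{k-1}H_{k+1}\right)=(H'_{k})^{2}-H_{k}H''_{k}$, derived from (\ref{hermite}), (\ref{pirveli}) and (\ref{meore}), and then invokes Lemma~\ref{turl}. You have merely written out the algebra that the paper leaves implicit, and your computations check out.
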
 
\begin{proof}
The proof follows from Lemma~\ref{turl} and the following identity
\begin{align}\label{identity1}
k(H_{k}^{2}-H_{k-1}H_{k+1}) = (H'_{k})^{2}-H_{k}H''_{k}
\end{align}
from (\ref{hermite}), (\ref{pirveli}) and (\ref{meore}). 
\end{proof}
\subsection{Checking the partial differential inequality}
Let $p=1+\frac{1}{k}$.   Further we assume $k \neq 0, -1$. Define $F=F_{k}$ as in the introduction:
\begin{align}\label{opr}
F(t) = \frac{H_{k+1}(q)}{H_{k}^{1+1/k}(q)} \quad \text{where} \quad \left| \frac{H'_{k}(q)}{H_{k}(q)}\right|=t, \quad q \in (R_{k}, \infty), \quad t \in (0, \infty).
\end{align}

Notice that by Corollary~\ref{cor} function  $\left| \frac{H'_{k}(q)}{H_{k}(q)}\right|=\mathrm{sign}(k) \frac{H'_{k}(q)}{H_{k}(q)}$ is positive decreasing in $q$ for $q \in (R_{k}, \infty)$, moreover by (\ref{loran}) we have 
$\frac{H'_{k}(q)}{H_{k}(q)} \sim \frac{k}{q}$ when $q \to +\infty$. From the same asymptotic formulas it follows that when $t \to 0+$ we have 
\begin{align*}
F(t)= 1-\frac{p(p-1)}{2}\, t^{2} +O(t^{4}).
\end{align*}
Therefore $F$ is well-defined function and $F \in C^{2}([0,\infty))$. 

Take  a positive $\varepsilon>0$ and define $M(x,y)$ as in (\ref{f1}):
\begin{align}\label{k1}
M(x,y) :=x^{p} F\left(\frac{y}{x}\right) \quad \text{for} \quad  y\geq 0, \quad x > \varepsilon >0.
\end{align}
Clearly $M(x,\sqrt{y}) \in C^{2}([\varepsilon, \infty)\times[0, \infty))$. By Theorem~1 in \cite{IV} we have inequality 
\begin{align}\label{ivo1}
\int_{\mathbb{R}^{n}} M(f,|\nabla f|) d\gamma_{n} \leq M\left(\int_{\mathbb{R}^{n}}f d\gamma_{n}, 0 \right)
\end{align}
for all smooth bounded $f \geq \varepsilon$ if  (\ref{matrica}) holds. In terms of $F$ (see (\ref{k1})) condition (\ref{matrica}) takes the form 
\begin{align}
&tFF''p(p-1)+F'F''-t(p-1)^{2}(F')^{2} \geq 0  \qquad \text{i.e., the determinant of (\ref{matrica}) is nonnegative}\label{determinant}\\
& F'' (t+t^{3})+F'(2t^{2}+1-2pt^{2})+Fp(p-1)t\leq 0 \qquad \text{i.e., the trace of (\ref{matrica}) is nonpositive} \label{trace}
\end{align}
where $t = \frac{y}{x}$ is the argument of $F$.  
%Let $p=\frac{k+1}{k}$ for $k>0$,  and define 
%\begin{align}\label{opr}
%f(t) = \frac{H_{k+1}(q)}{H_{k}^{1+1/k}(q)} \quad \text{where} \quad \frac{H'_{k}(q)}{H_{k}(q)}=t, \quad q \in (c_{k}, \infty), \quad t \in (0, \infty)
%\end{align}
%notice that if $t \to 0$ then 
%\begin{align*}
%f(t)= 1-\frac{p(p-1)}{2}\cdot t^{2} + O(t^{2})
%\end{align*}
%Indeed, this follows from (\ref{loran}). 
In fact we will show that we have equality in (\ref{determinant}) instead of inequality therefore the sign of (\ref{matrica}) will depend on the sign of trace (\ref{trace}).
We will see that inequality (\ref{trace}) will be reversed for $p \in (0,1)$. 
 
 From (\ref{opr}), (\ref{identity1}) and (\ref{tu1}) we obtain 
\begin{align}
 &F'(t) =-\frac{k+1}{|k|} \frac{1}{H_{k}^{1/k}};\label{odin}\\
 &F''(t) = \frac{F'}{|k|}\cdot   \frac{H_{k}H_{k-1}}{H_{k}^{2}-H_{k+1}H_{k-1}};\label{dva}\\
 &F(t) = -\frac{|k|}{k+1} \frac{H_{k+1}}{H_{k}}\, F'. \label{tri}
\end{align}

If we plug (\ref{dva}) and (\ref{tri}) into (\ref{determinant}) we obtain that the left hand side of (\ref{determinant}) is zero.  If we plug (\ref{dva}) and (\ref{tri}) into (\ref{trace}) we obtain 
\begin{align*}
\text{LHS of}\; (\ref{trace}) = \left[ \frac{(kH_{k-1}^{2}-H_{k}^{2}+H_{k-1}H_{k+1})^{2}+H_{k-1}^{2}H_{k}^{2}}{H_{k}^{2}(H_{k}^{2}-H_{k+1}H_{k-1})}\right]\, F'. 
\end{align*}
Thus the sign of LHS of (\ref{trace}) coincides with the sign of $F'$ which coincides with $\mathrm{sign}(-(k+1))$. 
The condition  $p \in \mathbb{R}\setminus [0,1]$ implies that $k>-1$ and therefore (\ref{matrica}) holds. The condition $p \in (0,1)$ implies that $k<-1$ and therefore inequality in (\ref{matrica}) is reversed.

Thus we have obtained (\ref{ivo1}) for smooth bounded functions $f \geq \varepsilon$. Next we claim that for an arbitrary smooth bounded $f\geq 0$ with $\int_{\mathbb{R}^{n}}f^{p} d\gamma_{n}<\infty$ we can apply the inequality to $f_{\varepsilon}:= f +\varepsilon$ and  send $\varepsilon$ to $0$ in (\ref{our}). Indeed, it follows from (\ref{bdef}) and (\ref{loran}) that as $t \to \infty$ we have
\begin{align*}
&F(t) \sim -t^{1+\frac{1}{k}} (H'_{k}(R_{k}))^{-\frac{1}{k}} \quad \text{for} \quad k>0;\\
&F(t) \sim \mathrm{sign}(-1-k) \left(\frac{e^{t^{2}/2} \, \sqrt{2 \pi}}{t \Gamma(-1-k) }\right)^{-\frac{1}{k}} |1+k|^{1+\frac{1}{k}}\quad \text{for} \quad k<0, \quad k \neq -1. 
\end{align*}
Thus for $p>1$ (i.e., $k>0$) the claim about the limit follows from the estimate $|F(t)| \leq C_{1}+ C_{2} t^{p}$ together with  the Lebesgue dominated convergence theorem.

  If   $p <0$ (i.e., $k \in (-1,0)$) we rewrite (\ref{our}) in a standard way as follows 
\begin{align}\label{lim}
\int_{\mathbb{R}^{n}} f_{\varepsilon}^{p} d\gamma_{n} - \left( \int_{\mathbb{R}^{n}} f_{\varepsilon} d\gamma_{n} \right)^{p} \leq \int_{\mathbb{R}^{n}}f_{\varepsilon}^{p}\left(1-
F\left(\frac{|\nabla f |}{f_{\varepsilon}}\right)\right) d\gamma_{n}.
\end{align}
Since $f$ is bounded, $f\geq 0$  and  $\int_{\mathbb{R}^{n}} f^{p} d\gamma_{n} <\infty$ there is no issue with the left hand side of (\ref{lim}) when $\varepsilon \to 0$. For the right hand side of (\ref{lim}) we notice that the function $x^{p}(1-F(y/x))$ is nonnegative and decreasing in $x$ then the claim follows from the monotone convergence theorem. The non negativity follows from the observation that $F(0)=1$ and $F'<0$ (see (\ref{odin}) where we have  $k>-1$).
The monotonicity follows from (\ref{bdef}), (\ref{odin}), (\ref{pirveli}) and  the straightforward computations
\begin{align}\label{hh4}
\frac{\partial }{\partial x} \left(x^{p}(1-F(y/x)) \right)=x^{p-1}\left(p-pF(t)+tF'(t) \right)=x^{p-1} p \left[1-\frac{q}{H_{k}^{\frac{1}{k}}(q)} \right],
\end{align} 
where $|k| \frac{H_{k-1}(q)}{H_{k}(q)}=t=\frac{y}{x}$ and $q\in (R_{k}, \infty)$.  The last expression in (\ref{hh4}) is negative because 
\begin{align*}
1 \geq F(t) = \frac{H_{k+1}}{H_{k}^{1+\frac{1}{k}}}=\frac{qH_{k}-kH_{k-1}}{H_{k}^{1+\frac{1}{k}}} >\frac{q}{H_{k}^{\frac{1}{k}}}. 
\end{align*}

Finally if $p \in (0,1)$ (i.e., $k <-1$) we have the opposite inequality in (\ref{lim}). In this case  the situation is absolutely the same as for $k \in (-1,0)$  except now we should consider  the function $x^{p}(F(y/x)-1)$ which is nonnegative and decreasing in $x$ (see (\ref{hh4})). This finishes the proof of the theorem.

%Notice that $M(x,y)$ is continuous on $[0,\infty)\times [0,\infty)$ and $M(0,y)=-[yH'_{k}(R_{k})]^{p}$. 
%Next as in \cite{IV} we consider slightly perturbed functions $M_{\varepsilon}(x,y):=M(x+\varepsilon, y)$ on $[0, \infty)\times [0, \infty)$ which still satisfy (\ref{matrica}) and we apply (\ref{ivo1}) with $f\geq 0$.  

Now let us show  Proposition~\ref{prob}. 
% If $t=0$ there is nothing to prove. By homogeneity we can assume that $x=1$. Then by (\ref{bdef}), (\ref{opr}) inequality (\ref{b2}) takes the form (we remind that $p=1+\frac{1}{k}$):
%\begin{align}\label{hine}
%1- f(t) \leq \frac{p(p-1)}{2} t^{2} \quad \text{for  all} \quad t >0,\;  p \in [1,2]. 
%\end{align}
Since $F(0)=1$ it is enough to show a stronger inequality, namely $F'+p(p-1)t \geq 0$. From (\ref{odin})  and the fact that $k\geq 1$ (since $p \in [1,2]$) we obtain that it is enough to show the following inequality 
 \begin{align*}
 -\frac{p}{H_{k}^{1/k}}+p(p-1)\frac{H'_{k}}{H_{k}}\geq 0 \quad \text{for all} \quad  k \geq 1, \; q \in (R_{k}, \infty).
 \end{align*}
Using (\ref{pirveli}) and $p=1+\frac{1}{k}$ we notice that   the inequality can be rewritten as follows $1>\frac{H_{k}(q)}{H_{k-1}^{\frac{k}{k-1}}(q)}$ for all $q \in (R_{k}, \infty)$. To verify the last inequality we remind  that  $F(0)=1$ and $F'(t) <0$. Therefore $F(t)\leq 1$. We recall the definition of $F(t)$ (see (\ref{opr})). It follows that $1\geq F = \frac{H_{k+1}}{H_{k}^{1+1/k}}$ for all $k>0$. The last inequality is the same as 
\begin{align*}
1>\frac{H_{k}(q)}{H_{k-1}^{\frac{k}{k-1}}(q)} \quad \text{for all} \quad q \in (R_{k}, \infty), \quad k \geq 1.
\end{align*}
This finishes the proof of the theorem. 

\subsection{Proof of Corollary~\ref{e-sob} and Corollary~\ref{nlog}:}
Notice that as $t \to 0$ we have  
\begin{align*}
F_{\exp}(y) = 1-\frac{y^{2}}{2}+O(y^{4}) \quad \text{and} \quad F_{-\ln}(y) = -\frac{y^{2}}{2}+O(y^{4}). 
\end{align*}
There are two ways to obtain the corollaries. 

\subsubsection{The first way:} One can check that  
\begin{align*}
&M_{\exp}(x,y)=e^{x}F_{\exp}(y), \quad M_{\exp}(x,0)=e^{x}, \quad M_{\exp}(x,\sqrt{y})\in C^{2}(\mathbb{R}\times \mathbb{R}_{+}); \\
&M_{-\ln}(x,y)=-\ln(x)+F_{-\ln}\left(\frac{y}{x}\right), \quad M_{-\ln }(x,0)=-\ln x, \quad x>0, 
\end{align*}
and $M_{-\ln}(x,\sqrt{y}) \in C^{2}([\varepsilon, \infty) \times \mathbb{R}^{+})$ for any $\varepsilon>0$. 
By  straightforward  computations we notice that if we set $\psi(q) = \alpha - \int_{1}^{q} H_{-1}(s)ds$  then using the identity  $1=qH_{-1}(q)+H_{-2}(q)$ we obtain 
\begin{align*}
F_{\exp}(H_{-1}) = q e^{\psi}, \quad F'_{\exp}(H_{-1})=-e^{\psi} \quad \text{and}  \quad F''_{\exp}(H_{-1}) = - \frac{H_{-1}}{H_{-2}}.
\end{align*}
Similarly we compute that 
\begin{align*}
F'_{-\ln}\left(\frac{H_{-2}}{H_{-1}}\right)=-H_{-1}\quad \text{and} \quad F''_{-\ln}\left(\frac{H_{-2}}{H_{-1}}\right)=- \frac{H_{-2}H_{-1}^{2}}{H_{-1}^{2}-H_{-2}}.
\end{align*}
Next one notices that  $M_{\exp}$ and $M_{-\ln}$  satisfy (\ref{matrica}) (in fact the determinant of (\ref{matrica}) is zero). Then by Theorem~1 in \cite{IV} we obtain the  corollaries.  
The passage to the limit for $M_{-\ln}(x,y)$ when $\varepsilon \to 0$ follows from the monotone convergence theorem. Indeed, we notice that  $-F_{-\ln}(y/x) \geq 0$ is decreasing in $x$. We apply Corollary~\ref{nlog} to $f_{\varepsilon}=f+\varepsilon$ and send $\varepsilon \to 0$.  
\subsubsection{The second way:} We will obtain the corollaries as a limiting case of Theorem~\ref{better}. Indeed, to verify Corollary~\ref{e-sob} let $f^{p}=e^{g}$  in (\ref{our}). Then (\ref{our}) takes the form 
\begin{align}\label{ee-sob}
\int_{\mathbb{R}^{n}}e^{g} F_{\frac{1}{p-1}}\left( \frac{|\nabla g|}{p}\right)d\gamma_{n} \leq \left( \int_{\mathbb{R}^{n}} e^{g/p} d\gamma_{n}\right)^{p}.
\end{align}
Now we take $p \to \infty$. The RHS of (\ref{ee-sob}) tends to $\exp(\int_{\mathbb{R}^{n}}  g d\gamma_{n})$. For the  LHS of (\ref{ee-sob}) we should   compute the limit 
\begin{align*}
F_{\exp}(t) :=\lim_{p \to \infty} F_{\frac{1}{p-1}}\left( \frac{t}{p}\right) = \lim_{p \to \infty} F_{\frac{1}{p-1}}\left( \frac{t}{p-1}\right) = \lim_{k \to 0+} F_{k}(tk). 
\end{align*}
It is clear that $F_{\exp}(0)=1$. 
Next  if we take $k \to 0+$ in  (\ref{bdef}) we obtain 
\begin{align*}
\lim_{k \to 0+} F_{k}\left(\left| \frac{H'_{k}}{H_{k}} \right| \right) = \lim_{k \to 0+} F_{k}\left(k \frac{H_{k-1}}{H_{k}}  \right) = \lim_{k \to 0+} F_{k}\left(k \frac{H_{-1}}{H_{0}}  \right) = F_{\exp}(H_{-1})
\end{align*}

On the other hand for  the RHS of (\ref{bdef}) we have 
\begin{align*}
\lim_{k \to 0+}  \frac{H_{k+1}(q)}{H_{k}^{1+\frac{1}{k}}} = q \lim_{k \to 0+} H_{k}^{-1/k}.
\end{align*}
Here we have used $H_{0}(q)=1$ and $H_{1}(q)=q$. 
Thus it remains to find $\lim_{k \to 0+}H_{k}^{-1/k}$. Notice that $H(x,k):=H_{k}(x)$ is an entire function in $x$ and $k$ (see \cite{temme} for the Parabolic cylinder function).  If we take derivative in $k$ of (\ref{pirveli}) we obtain $H_{xk}(x,k)=H(x,k-1)+kH_{k}(x,k)$ (here subindices denote partial derivatives). Now taking $k=0$ we obtain $H_{xk}(x,0)=H(x,-1)$. Thus $H_{k}(x,0)$ is an antiderivative of $H(x,-1)=H_{-1}$. So 
\begin{align*}
\lim_{k \to 0+} H_{k}^{-1/k}= \lim_{k \to 0+} \exp\left( -\frac{1}{k}\ln (1+kH_{k}(x,0)+o(k))\right) = \exp\left(- \int H_{-1}(s) ds \right).
\end{align*} 
Finally we obtain
\begin{align}\label{hh1}
F_{\exp}(H_{-1}(q)) = q \exp \left(C - \int_{1}^{q}H_{-1} \right)
\end{align}

In order to satisfy the condition $F_{\exp}(0)=1$ the constant $c$ must be chosen as follows $C= \int_{1}^{\infty}(H_{-1}-\frac{1}{s})ds$  (indeed send $q \to \infty$ in (\ref{hh1})). This finishes the proof of Corollary~\ref{e-sob}. It is worth mentioning that we have also obtained  (see (\ref{kap}))
\begin{align*}
H_{k}(x,0)=\int_{1}^{x}H_{-1}(s)ds - \alpha.
\end{align*}

To verify Corollary~\ref{nlog} let $F(x,k):=F_{k}(x)$. Let $F_{k}(x,k)$ denotes the partial derivative in $k$ of $F(x,k)$.  If we send $p \to 0, p<0$ in (\ref{our}) and compare the terms of order $p$ we obtain 
\begin{align*}
\int_{\mathbb{R}^{n}}\left(  \ln f - F_{k}\left(\frac{|\nabla f|}{f}, -1\right) \right)d\gamma_{n} \geq \ln \left(\int_{\mathbb{R}^{n}} f d\gamma_{n} \right)
\end{align*}
It remains to find the function $F_{k}(x,-1)$. Let us equate terms of order $(k+1)$ as $k \to -1, k<-1$ in the following equality 
\begin{align*}
F\left(\frac{H_{x}(x,k)}{H(x,k)},k \right) = \frac{H(x,k+1)}{H(x,k)^{1+\frac{1}{k}}}. 
\end{align*}
The straightforward computation shows that 
\begin{align*}
F_{k}\left( \frac{H_{-2}(x)} {H_{-1}(x)},-1\right) = H_{k}(x,0)+\ln H_{-1}(x)=\int_{1}^{x}H_{-1}(s)ds - \alpha + \ln H_{-1}(x)
\end{align*}
where 
\begin{align*}
\alpha=\int_{1}^{\infty}\left( H_{-1}(s) - \frac{1}{s}\right)ds.
\end{align*}

 \section{Concluding remarks}
 
 The reader may wander how we guessed the choice (\ref{f1}). Of course it was not a random guess. Function (\ref{f1}) is the best possible in the sense that the determinant of (\ref{matrica}) is identically zero
\begin{align}\label{monge}
&M_{yy}(M_{xx}+\frac{M_{y}}{y})-M_{xy}^{2}=0,\\
&M(x,0)=x^{p} \quad \text{for} \quad   x \geq 0. \nonumber
\end{align}
 Initially this was the way we started looking for  $M(x,y)$ as the solution of the  Monge--Amp\`ere equation with a drift (\ref{monge}). By a proper change of variables the equation reduces to the backwards heat equation  (see \cite{IV} for more details where the connection with R.~Bryant, Ph.~Griffiths theory of exterior differential systems was exploited)
 \begin{align}
 &u_{xx}+u_{t}=0, \label{h1}\\
 &u(x,0)=C x^{\frac{p}{p-1}} \label{h2} \quad \text{for} \quad  x\geq 0.
 \end{align}
One can notice that the Hermite polynomials  do satisfy (\ref{h1}) and (\ref{h2}) when $\frac{p}{p-1}$ is a positive integer. In general, one should invoke Hermite functions 
 and this is the reason of appearance of these functions in our theorem.  %that the Hermite functions appeared in the theorem. 
 
 Another possibility is to assume that $M(x,y)$ should be homogeneous of degree $p$ which enforces $M$ to have the form (\ref{k1}) for some $F$. Next setting $h=\frac{F}{F'}$ 
 %one arrives from (\ref{monge}) to (\ref{ode1}) which
 and   further by a  subtle  change of variables 
 %can be reduced to
 one obtains  Hermite differential equation (\ref{hermite}).
 
Nevertheless, for the formal proof of Theorem~\ref{better}  we do not need to go through the details. We have  $M(x,y)$ defined by (\ref{f1}) and we just need to check that it satisfies the desired properties.

 The fact that $M(x,y)$  (see (\ref{f1})) satisfies (\ref{matrica}) makes it possible to extend Theorem~\ref{better}  in a semigroup setting  for uniformly log-concave measures. Indeed, let  $d\mu = e^{-U}dx$ where $\mathrm{Hess}\, U \geq R\cdot Id,$ $R>0$. Let $L=\Delta - \nabla U \cdot \nabla$, and let $P_{t} = e^{tL}$ be the semigroup with generator $L$ (see \cite{IV, BGL}).  
 \begin{corollary}
 For any $p \in \mathbb{R}\setminus [0,1]$ and  any smooth bounded $f\geq0$ with $\int_{\mathbb{R}^{n}} f^{p} d\mu <\infty$  we have 
 \begin{align*}
 P_{t} \left[f^{p} F_{\frac{1}{p-1}}\left(\frac{| \nabla f|}{f\sqrt{R}}\right) \right] \leq (P_{t} f)^{p} F_{\frac{1}{p-1}}\left(\frac{| \nabla P_{t} f|}{P_{t} f \sqrt{R}}\right).
 \end{align*}
 The inequality is reversed if $p\in (0,1)$.
 \end{corollary}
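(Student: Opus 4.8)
The plan is to run the Bakry--\'Emery semigroup interpolation, converting the static matrix inequality (\ref{matrica}) already verified for $M(x,y)=x^{p}F_{\frac1{p-1}}(y/x)$ into the asserted pointwise semigroup estimate. Fix $t>0$ and a point $x$, write $u=u_{s}:=P_{t-s}f$ and $w:=|\nabla u_{s}|$, and consider the path
\[
\phi(s)=P_{s}\big[M(u_{s},w/\sqrt{R})\big]=P_{s}\!\left[u_{s}^{\,p}F_{\frac1{p-1}}\!\Big(\tfrac{|\nabla u_{s}|}{u_{s}\sqrt R}\Big)\right],\qquad s\in[0,t].
\]
At the endpoints $\phi(t)$ is the left-hand side and $\phi(0)$ the right-hand side of the corollary, so it is enough to show that $\phi$ is nonincreasing when $p\notin[0,1]$ (and nondecreasing when $p\in(0,1)$). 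Since $P_{s}$ is a positive operator and $\phi'(s)=P_{s}[LG+\partial_{s}G]$ with $G=M(u_{s},w/\sqrt R)$, it suffices to prove that the bracket $LG+\partial_{s}G$ has the correct pointwise sign.

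First I would differentiate. Using $\partial_{s}u_{s}=-Lu_{s}$, the diffusion chain rule for $L=\Delta-\nabla U\cdot\nabla$, the identity $\partial_{s}w=-\nabla u\cdot\nabla(Lu)/w$, and $\tfrac12 Lw^{2}=wLw+|\nabla w|^{2}$ to eliminate $Lw$, one checks that the first-order terms $M_{x}Lu$ cancel and
\[
LG+\partial_{s}G=\frac{N_{w}}{w}\,\Gamma_{2}(u)+\Big(N_{ww}-\frac{N_{w}}{w}\Big)|\nabla w|^{2}+N_{xx}\,w^{2}+2N_{xw}\,\Gamma(u,w),
\]
where $N(x,w):=M(x,w/\sqrt R)$, $\Gamma(g,h)=\nabla g\cdot\nabla h$, $\Gamma(u,w)=\mathrm{Hess}\,u(\nabla u,\nabla u)/w$, and $\Gamma_{2}(u)=\tfrac12 L|\nabla u|^{2}-\nabla u\cdot\nabla Lu$ is the iterated carr\'e du champ.

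The heart of the matter is the sign. Bochner's formula gives $\Gamma_{2}(u)=\|\mathrm{Hess}\,u\|_{HS}^{2}+\mathrm{Hess}\,U(\nabla u,\nabla u)$, and the hypothesis $\mathrm{Hess}\,U\ge R\,\mathrm{Id}$ yields $\mathrm{Hess}\,U(\nabla u,\nabla u)\ge Rw^{2}$. For $p\notin[0,1]$ one has $k=\frac1{p-1}>-1$, whence $F'<0$ by (\ref{odin}), so $N_{w}<0$ and $N_{w}/w\le0$, and $N_{ww}\le0$ by concavity of $F$. Feeding $N_{w}/w\le0$ into the curvature bound and into $\|\mathrm{Hess}\,u\|_{HS}^{2}\ge|\nabla w|^{2}$, and then $N_{ww}\le0$ into the Cauchy--Schwarz bound $\Gamma(u,w)^{2}\le w^{2}|\nabla w|^{2}$, the $|\nabla w|^{2}$ terms collapse and the bracket is dominated by
\[
\big(N_{xx}+RN_{w}/w\big)w^{2}+2N_{xw}\,\Gamma(u,w)+N_{ww}\,\Gamma(u,w)^{2}/w^{2}.
\]
Via $M_{xx}=N_{xx}$, $M_{xy}=\sqrt R\,N_{xw}$, $M_{yy}=R\,N_{ww}$ and $M_{y}/y=R\,N_{w}/w$, this is exactly the quadratic form of the matrix (\ref{matrica}) evaluated on the vector $\big(w,\ \Gamma(u,w)/(\sqrt R\,w)\big)$, hence $\le0$. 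Thus $\phi'\le0$ and $\phi(t)\le\phi(0)$. For $p\in(0,1)$ every inequality reverses, since then $k<-1$, $F$ is increasing and convex ($N_{w}/w\ge0$, $N_{ww}\ge0$) and (\ref{matrica}) holds with the opposite sign; this gives $\phi'\ge0$ and the reversed estimate.

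I expect the principal obstacle to be precisely this sign bookkeeping: each of the three elementary bounds (the curvature inequality, $\|\mathrm{Hess}\,u\|_{HS}^{2}\ge|\nabla w|^{2}$, and $\Gamma(u,w)^{2}\le w^{2}|\nabla w|^{2}$) must be applied against a coefficient of the correct sign, and these signs are furnished solely by the monotonicity and concavity of $F_{\frac1{p-1}}$ established in Section~\ref{prf}. The remaining points are routine: justifying differentiation under $P_{s}$ and the chain rule (smoothness and sufficient decay of $P_{t-s}f$, together with the regularization $f\mapsto f+\varepsilon$ and the passage $\varepsilon\to0$), and handling the set $\{w=0\}$, which is controlled by the $C^{2}$ regularity of $(x,y)\mapsto M(x,\sqrt{y})$ recorded in the proof of Theorem~\ref{better}.
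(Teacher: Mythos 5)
Your proposal is correct, but it takes a genuinely different route from the paper. The paper's entire proof is two lines: observe that $\tilde{M}(x,y)=M(x,y/\sqrt{R})$ satisfies the matrix condition (\ref{matrica}), then invoke inequality (2.3) from \cite{IV} as a black box (that reference establishes the pointwise semigroup bound for any $M$ obeying the matrix condition, via a maximum-principle/exterior-differential-systems argument). You instead re-derive the semigroup bound from scratch by the classical Bakry--\'Emery interpolation: differentiating $\phi(s)=P_{s}[M(P_{t-s}f,|\nabla P_{t-s}f|/\sqrt{R})]$, invoking Bochner's formula with $\mathrm{Hess}\,U\geq R\,\mathrm{Id}$, and reducing the sign of $\phi'$ to the quadratic form of (\ref{matrica}). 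I checked your computation: the expression for $LG+\partial_{s}G$ is right (the $M_{x}Lu$ terms do cancel, and eliminating $Lw$ via $\tfrac12 Lw^{2}=wLw+|\nabla w|^{2}$ produces exactly the $\Gamma_{2}$ and $(N_{ww}-N_{w}/w)|\nabla w|^{2}$ terms), the three auxiliary bounds are applied against coefficients of the correct sign in both the case $k>-1$ and the reversed case $k<-1$, and the final identification of the dominating expression with the form of (\ref{matrica}) on the vector $\bigl(w,\ \Gamma(u,w)/(\sqrt{R}\,w)\bigr)$ is precisely the congruence by $\mathrm{diag}(1,1/\sqrt{R})$ that makes the paper's rescaling observation work --- a point worth noting, since naively the matrix in (\ref{matrica}) written for $\tilde{M}$ has $\tilde{M}_{y}/y$ in the $(1,1)$ entry rather than the curvature-adapted $R\tilde{M}_{y}/y$ that your computation shows is what is actually needed, and your argument makes this bookkeeping explicit rather than deferring it to \cite{IV}. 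What each approach buys: the paper's citation is shorter and consistent with its framework, where all such inequalities flow from the single result in \cite{IV}; yours is self-contained, shows exactly where the hypothesis $\mathrm{Hess}\,U\geq R\,\mathrm{Id}$ and the monotonicity/concavity of $F_{\frac{1}{p-1}}$ enter, and in particular uses only $N_{w}\leq 0$, $N_{ww}\leq 0$ and the matrix condition, which clarifies why the $p\in(0,1)$ case reverses wholesale. The technical caveats you flag at the end (differentiation under $P_{s}$, the set $\{|\nabla u|=0\}$ handled through the $C^{2}$ regularity of $M(x,\sqrt{y})$, and the $\varepsilon$-regularization $f\mapsto f+\varepsilon$) are exactly the ones the paper disposes of by citing \cite{IV} and by the limiting argument in the proof of Theorem~\ref{better}, so acknowledging rather than fully executing them is consistent with the paper's own level of detail.
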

 \begin{proof}
 Notice that $\tilde{M}(x,y) = M(x,\frac{y}{\sqrt{R}})$ satisfies (\ref{matrica}). Now it remains to use inequality (2.3) from \cite{IV}. 
 \end{proof}
 Next by taking $t \to \infty$ and using the fact that $| \nabla P_{t} f | \leq e^{-tR} P_{t} | \nabla f |$ we obtain the following corollary 
 \begin{corollary}
  Let $d\mu =e^{-U}dx$ where $\mathrm{Hess}\, U \geq R \cdot Id$ for some $R>0$.
  For any $p \in \mathbb{R}\setminus [0,1]$ and  any smooth bounded $f \geq  0$ with $\int_{\mathbb{R}^{n}} f^{p} d\mu<\infty$ we have 
 \begin{align*}
 \int_{\mathbb{R}^{n}} f^{p} F_{\frac{1}{p-1}}\left(\frac{| \nabla f|}{f\sqrt{R}}\right) d\mu  \leq  \left( \int_{\mathbb{R}^{n}} f d\mu \right)^{p}.
  \end{align*}
  The inequality is reversed if $p \in (0,1)$. 
 \end{corollary}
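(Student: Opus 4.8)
The plan is to integrate the pointwise semigroup estimate of the preceding corollary against the invariant measure $\mu$ and then to let $t\to\infty$. Applying that corollary to a fixed smooth bounded $f$ and integrating both sides with respect to $d\mu$ gives
\begin{align*}
\int_{\mathbb{R}^{n}} P_{t}\!\left[f^{p} F_{\frac{1}{p-1}}\!\left(\frac{|\nabla f|}{f\sqrt{R}}\right)\right] d\mu \le \int_{\mathbb{R}^{n}} (P_{t}f)^{p}\, F_{\frac{1}{p-1}}\!\left(\frac{|\nabla P_{t}f|}{P_{t}f\sqrt{R}}\right) d\mu.
\end{align*}
Since $d\mu=e^{-U}dx$ (normalized to a probability measure) is invariant and reversible for $P_{t}=e^{tL}$, one has $\int_{\mathbb{R}^{n}} P_{t}g\, d\mu=\int_{\mathbb{R}^{n}} g\, d\mu$ for every $g\in L^{1}(\mu)$. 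Hence the left-hand side does not depend on $t$ and already equals the left-hand side of the corollary to be proved; the whole argument therefore reduces to computing the limit of the right-hand side.

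For that limit I would invoke two consequences of the uniform log-concavity $\mathrm{Hess}\,U\ge R\cdot\mathrm{Id}$: the ergodic convergence $P_{t}f\to\int_{\mathbb{R}^{n}} f\, d\mu$ coming from the spectral gap, and the commutation bound $|\nabla P_{t}f|\le e^{-tR}P_{t}|\nabla f|$ recorded just above the statement. Because $P_{t}$ is Markovian we have $P_{t}|\nabla f|\le\|\nabla f\|_{\infty}$, so the argument of $F_{\frac{1}{p-1}}$ tends to $0$ as $t\to\infty$; using $F_{\frac{1}{p-1}}(0)=1$ and the continuity of $F_{\frac{1}{p-1}}$ proved in Section~\ref{prf}, the integrand converges pointwise to the constant $\left(\int_{\mathbb{R}^{n}} f\, d\mu\right)^{p}$, whose $\mu$-integral is exactly $\left(\int_{\mathbb{R}^{n}} f\, d\mu\right)^{p}$.

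The one genuine obstacle is the interchange of limit and integral on the right, and this is precisely where the factor $e^{-tR}$ is indispensable: for $p>1$ the function $F_{\frac{1}{p-1}}$ already grows polynomially like $-t^{p}$, and for $p<0$ it grows super-exponentially like $e^{ct^{2}}$, so an argument that failed to stay bounded would wreck any domination. To rule this out I would first assume $f\ge\varepsilon>0$, whence the order-preserving property gives $P_{t}f\ge\varepsilon$ and
\begin{align*}
0\le\frac{|\nabla P_{t}f|}{P_{t}f\sqrt{R}}\le\frac{e^{-tR}\|\nabla f\|_{\infty}}{\varepsilon\sqrt{R}}\le\frac{\|\nabla f\|_{\infty}}{\varepsilon\sqrt{R}}\qquad\text{for all }t\ge 0.
\end{align*}
The argument thus remains in a fixed compact interval on which $F_{\frac{1}{p-1}}$ is bounded, while $(P_{t}f)^{p}$ stays bounded because $P_{t}f$ ranges in $[\varepsilon,\sup f]\subset(0,\infty)$; as $\mu$ is finite, dominated convergence lets me pass to the limit. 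For a general smooth bounded $f\ge 0$ I would apply the inequality just obtained to $f+\varepsilon$ and send $\varepsilon\to 0$ exactly as in the proof of Theorem~\ref{better}, rewriting it in the form analogous to (\ref{lim}) and using monotone convergence. Finally, for $p\in(0,1)$ the pointwise semigroup estimate is reversed, so its integrated form and the resulting limit are reversed as well, which yields the reversed conclusion and completes the proof.
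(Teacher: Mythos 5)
Your proposal is correct, but it is not what the paper actually does: the paper's entire proof of this corollary is the citation ``See Corollary~1 in \cite{IV}'', which encapsulates the passage from the pointwise semigroup estimate to the integrated inequality once and for all; the sentence preceding the statement (``by taking $t\to\infty$ and using $|\nabla P_{t}f| \leq e^{-tR}P_{t}|\nabla f|$'') is only a sketch of the route you have now carried out in full. Your execution has one particularly nice feature the sketch leaves implicit: by integrating the pointwise estimate against $\mu$ and invoking invariance, $\int P_{t}g\, d\mu = \int g\, d\mu$, the left-hand side becomes $t$-independent, so you never need ergodic convergence of $P_{t}$ applied to the possibly badly-behaved integrand $f^{p}F_{\frac{1}{p-1}}(|\nabla f|/(f\sqrt{R}))$ --- only the right-hand side requires a limit, and there your reduction to $f \geq \varepsilon$ keeps the argument of $F_{\frac{1}{p-1}}$ in a fixed compact interval, which is exactly what neutralizes the polynomial ($p>1$) and super-exponential ($p<0$) growth of $F_{\frac{1}{p-1}}$. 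The only step deserving one more line is the pointwise convergence $P_{t}f \to \int f\, d\mu$: the spectral gap you invoke gives convergence in $L^{2}(\mu)$, not pointwise, along the full net. The cleanest fix uses the commutation bound itself: for fixed $x$,
\begin{align*}
\Bigl| P_{t}f(x) - \int_{\mathbb{R}^{n}} P_{t}f\, d\mu \Bigr| \leq \int_{\mathbb{R}^{n}} |P_{t}f(x)-P_{t}f(y)|\, d\mu(y) \leq e^{-tR}\, \|\nabla f\|_{\infty} \int_{\mathbb{R}^{n}} |x-y|\, d\mu(y) \to 0,
\end{align*}
where the last integral is finite because $\mathrm{Hess}\, U \geq R\cdot Id$ forces Gaussian tails for $\mu$; alternatively, since your integrand is uniformly bounded, $L^{2}(\mu)$-convergence of $P_{t}f$ already suffices to pass to the limit in the integral without any pointwise statement. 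With that line added, your argument is a complete, self-contained proof; what the paper's citation buys instead is brevity and a general framework, while your version makes visible exactly where invariance, the Markov property, and the factor $e^{-tR}$ enter, and your final $\varepsilon\to 0$ step via the monotone-convergence rewriting analogous to (\ref{lim}) transfers verbatim from the Gaussian case in the proof of Theorem~\ref{better}.
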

 \begin{proof}
 See Corollary~1 in \cite{IV}.% The only property we need to use is that $M_{y} \leq 0$ but this automatically follows from (\ref{matrica}) as we  have explained it in the beginning of Section~\ref{prf}
 \end{proof}
  The limiting cases of these inequalities when  $p \to \pm \infty$ and  $p \to 0$   should be understood in the sense of functions $M_{\exp}$ and $M_{-\ln}$ as in Corollary~\ref{e-sob} and Corollary~\ref{nlog}. 
  
 Finally we would like to mention that having characterization (\ref{matrica}) of functional inequalities (\ref{ivo1}) makes approach to the problem (\ref{our}) systematic.   Very similar \emph{local estimates}  happen to rule some \emph{global inequalities}. We refer the reader to our recent papers on this subject \cite{IV01, IV02, IV03}.

 \subsection*{Acknowledgements} We are very grateful to Robert Bryant who suggested a change of variables in (\ref{determinant}). 
 %  in (\ref{ode1})  which afterwards reduced it  to  a Riccati equation. 


\begin{thebibliography}{9}
\bibitem{ABD} A.~Arnold, J.~P.~Bartier, J.~Dolbeault, \emph{Interpolation between logarithmic Sobolev and Poincar\'e inequalities}, Commun. Math. Sci. \textbf{5} (2007) 971--979. 
\bibitem{ALS} P.~D.~Pelo, A.~Lanconelli, A.~I.~Stan, \emph{An extension of the Beckner's type Poincar\'e inequality to convolution measures on abstract Wiener spaces}, arXiv: 1409.5861
\bibitem{BGL} D.~Bakry, I.~Gentil, M.~Ledoux, \emph{Analysis and Geometry of Markov Diffusion Operators}, Grundlehren der Mathematischen Wissenschaften \textbf{348}. Springer, Cham. 
\bibitem{BCR1} F.~Barthe, P.~Cattiaux, C.~Roberto, \emph{Interpolated inequalities between exponential and Gaussian, Orlicz hypercontractivity and isoperimetry}, (to appear in Revista Mat. Iberoameicana). arXiv:0407219 
\bibitem{BCR2} F.~Barthe, P.~Cattiaux, C.~Roberto, \emph{Isoperimetry between exponential and Gaussian}, arXiv:0601475
\bibitem{BR1} F.~Barthe, C.~Roberto, \emph{Sobolev inequalities for probability measures on the real line}, Studia Math., 159(3):481--497, 2003
\bibitem{WB} W.~Beckner, \emph{A generalized Poincar\'e inequality for Gaussian measures}, Proceedings of the American Mathematical Society 105, no. 2, 397--400 (1989)
\bibitem{Bob1} S.~G.~Bobkov, P.~Tetali, \emph{Modified log-sobolev inequalities, mixing and hypercotractivity}, In Proceedings of the thirty-fifth annual ACM symposium on Theory of computing, pages 287--296. ACM, 2003. 
\bibitem{Bob2} S.~G.~Bobkov, F.~G\"otze, \emph{Exponential integrability and transportation cost related to logarithmic Sobolev inequalities}, J. Funct. Anal. 163:1--28, 1999
\bibitem{BBL} S.~Boucheron, O.~Bousquet, G.~Lugosi, P.~Massart,  \emph{Moment inequalities for functions of independent random variables}, Ann. Probab., to appear, 2004
\bibitem{Chaf} D.~Chafai,  \emph{On $\Phi$-entropies and $\Phi$-Sobolev inequalities}, preprint, 2002.
\bibitem{LD} L.~Durand, Nicholson-type integrals for products of Gegenbauer functions and related topics, \emph{Theory and Applications of Special Functions,} R.~Askey, ed., Academic Press, New York and London, 1975, 353--374. 
\bibitem{EMM1} \'A.~Elbert, M.~E.~Muldoon, \emph{Inequalities and monotonicity properties for zeros of Hermite functions}, Proc. Roy. Soc. Edinburgh Sect. \textbf{A 129} (1999), 57--75. 
\bibitem{EMM2} \'A.~Elbert, M.~E.~Muldoon, \emph{Approximations for zeros of hermite functions}, Contemporary Mathematics, 471: 117--126, 2008   
\bibitem{HO} W.~K.~Hayman, E.~L.~Ortiz, \emph{An upper bound for the largest zero of Hermite's function with applications to subharmonic functions}, Proc. Roya. Soc. Edinburgh \textbf{75A} (1975-76), 183--197
\bibitem{IV02} P.~Ivanisvili, A.~Volberg, \emph{Bellman partial differential equation and the hill property for classical isoperimetric problems},
 arXiv: 1506.03409
\bibitem{IV01} P.~Ivanisvili, A.~Volberg, \emph{Hessian of Bellman functions and uniqueness of Brascamp--Lieb inequality}, J.~London Math. Soc. (2015) 92 (3): 657--674.
\bibitem{IV} P.~Ivanisvili, A.~Volberg, \emph{Isoperimetric functional inequalities via the maximum principle: the exterior differential systems approach}, arXiv: 1511.06895
\bibitem{IV03} P.~Ivanisvili, \emph{Boundary value problem and the Ehrhard inequality}, arXiv: 1605.04840
\bibitem{KO} A.~Kolesnikov, \emph{Modified Log-Sobolev inequalities and isoperimetry}, arXiv: 0608681
\bibitem{RK} R.~Latala, K.~Oleszkiewicz, \emph{Between Sobolev and Poincar\'e}. Geometric Aspects of Functional Analysis. \emph{Lect. Notes Math}., 1745: 147--168, 2000 
\bibitem{MT} B.~S.~Madhava Rao, V.~R.~Thiruvenkatachar, \emph{On an inequality concerning orthogonal polynomals}, Proceedings of the Indian Academy of Sciences -  Section A, June 1949, Volume 29, Issue 6, pp 391--393. 
\bibitem{JN} J.~Nash, \emph{Continuity of solutions of parabolich and elliptic equations}, Amer. J. Math. 88 (1958), 931--954
\bibitem{NIST} F.~W.~J.~Oliver, D.~W.~Lozier, R.~F.~Boisvert and C.~W.~Clark, \emph{NIST handbook of mathematical functions},  Cambridge University Press, Cambridge U.K. (2010). 
\bibitem{temme} N.~Temme, \emph{Asymptotic Methods for Integrals},  World Scientific, Singapore, 2015. 
\bibitem{WFY} F.~Y.~Wang, \emph{A generalization of Poincar\'e and log-Sobolev inequalities}, Potential Analysis \textbf{22} (2005) 1-15 










\end{thebibliography}
\end{document}